\DeclareMathAlphabet{\mathsfit}{\encodingdefault}{\sfdefault}{m}{sl}
\SetMathAlphabet{\mathsfit}{bold}{\encodingdefault}{\sfdefault}{bx}{sl}
\newcommand{\tens}[1]{\bm{\mathsfit{#1}}}
\numberwithin{equation}{section}						
\numberwithin{figure}{section}							
\numberwithin{table}{section}							
\newtheorem{thm}{Theorem}  
\newtheorem{prop}{Proposition}
\newtheorem{cor}{Corollary}
\newtheorem{lem}{Lemma}
\newtheorem{rem}{Remark}
\newtheorem{defn}{Definition}
\newcounter{example}[section]
\newcommand{\ck}{\color{black}}
\newcommand{\N}{\mathbb N}
\newcommand{\R}{\mathbb R}
\newcommand{\X}{\mathfrak{X}}
\newcommand{\Cc}{\mathcal{C}}
\newcommand{\Dd}{\mathcal{D}}
\newcommand{\Gg}{\mathcal{G}}
\newcommand{\CC}{\mathsf{C}}
\newcommand{\spn}{\mathsf{Span}}
\newcommand{\rk}{\mathsf{rank~}}	
\newcommand{\ad}{\mathrm{ad}}
\newcommand{\dd}{\mathrm{d}}
\newcommand{\im}{\mathrm{im}}
\newcommand{\TT}{\mathrm{T}}
\newcommand{\ddt}{\frac{\dd}{\dd t}}		
\newcommand{\ol}[1]{\overline{#1}}
\newcommand{\ds}{\displaystyle}
\newcommand{\ie}{\emph{i.e.}~}
\newcommand{\eg}{\emph{e.g.}~}
\newcommand{\apost}{\emph{a~posteriori}~}
\title{On Singularities of Flat Affine Systems With $\tens{n}$ States and $\tens{n}\bm{-1}$ Controls}
\author{Yirmeyahu J. Kaminski\thanks{Department of Applied
    Mathematics, Holon Institute of Technology, Holon, Israel, \hfill
    \break e-mail: jykaminski@gmail.com} \and Jean
  L\'{e}vine\thanks{CAS, Unit\'e Maths et Syst\`emes, MINES-ParisTech,
    PSL Research University, 60 Bd Saint-Michel, 75272 Paris Cedex 06,
    France, e-mail: jean.levine@mines-paristech.fr} \and Fran\c{c}ois
  Ollivier\thanks{CNRS, LIX laboratory, \'Ecole polytechnique, 91128 Palaiseau Cedex, France, \hfill \break e-mail: Francois.Ollivier@lix.polytechnique.fr}
}
\date{}
\begin{document}

\maketitle

\begin{abstract} We study the set of intrinsic singularities of flat affine systems with $n-1$ controls and $n$ states using the notion of Lie-B\"acklund atlas, previously introduced by the authors. For this purpose, we prove two easily computable sufficient conditions to construct flat outputs as a set of independent first integrals of distributions of vector fields, the first one in a generic case, namely in a neighborhood of a point where the $n-1$ control vector fields are independent, and the second one at a degenerate point where $p-1$ control vector fields are dependent of the $n-p$ others, with $p>1$.
After introducing the $\Gamma$-accessibility rank condition, we show
that the set of intrinsic singularities includes the set of points
where the system does not satisfy this rank condition and is included
in the set where a distribution of vector fields introduced in the
generic case is singular. We conclude this analysis by three examples
of apparent singularities of flat systems in generic and non generic
degenerate cases.
\end{abstract}

\section{Introduction}

Differential flatness \cite{FLMR95,FLMR99,Levine09} is known to be a
powerful notion in control theory. Roughly speaking, a system with $m$
independent controls $u=(u_{1},\ldots, u_{m})\in \R^{m}$ and state $x=
(x_{1}, \ldots, x_{n})$ defined on a $n$-dimensional smooth manifold
$X$, is said to be (differentially) flat at a given point $(x, u, \dot{u}, \ddot{u}, \ldots)$ of the infinite dimensional jet manifold
$$X\times \R^{m}_{\infty}  \triangleq X\times \R^{m} \times \R^{m}\times \cdots$$ 
if, and only if, its trajectories may be expressed, in
a neighborhood of this point, as functions of the trajectories of $m$ functionally independent smooth
variables, called flat outputs, and a finite number of their time
derivatives.  

Although non generic from a mathematical standpoint, flatness is a property shared by many popular models in various branches of engineering and has been shown to be particularly useful to solve motion planning problems (see \eg \cite{Levine09}).

In many cases, the flat outputs
can only be defined in a dense open set, and one may need to use
different parameterizations to cover the largest possible subset of the system configuration space, thus defining an atlas (see \cite{CE14,CE17,KLO17}).
Therefore, obtaining local flatness criteria allowing to build atlases covering the
\emph{widest possible domain} is an important issue, in particular since the complementary of this domain, by definition, is equal to the \emph{set of intrinsic singularities} \cite{KLO17}. It is remarkable that intrinsic flatness singularities may be interpreted as points in a neighbourhood of which the flatness-based control design is non robust since flat outputs stop existing there. On the contrary, an apparent singularity may allow a locally robust design by a suitable change of flat output.

In this paper, we continue our study of flatness singularities, initiated in \cite{KLO17}, by
restricting ourselves to control affine systems with $n$ states and $n-1$ controls, of the
form
$$\dot{x}= f_{0}(x) + \sum_{j=1}^{n-1} u_{j} f_{j}(x) \triangleq g(x,u).$$ 

After a brief recall, in this context, of concepts and notations related to flat systems and their singularities \cite{KLO17}, we focus attention on the $\Gamma$-accessibility rank condition at a point, the restriction at a point of the criterion studied, \eg\negthickspace, in \cite{HK77,JS72,Isidori86}, and the first order controllability around an integral curve generated by a constant control passing through this point, in the spirit of \eg \cite{SM67,K}, which are proven to be  necessary conditions for flatness\footnote{Note that, to the authors knowledge, and in spite of a long standing study of nonlinear controllability by many authors, these particular results are not available in the literature.}.  Therefore the points that do not satisfy them are naturally excluded from the above mentioned atlases and are thus contained in the so-called \emph{intrinsic singularity set}. Then, we show in theorem~\ref{theo::flat_outputs} that the sufficient condition for $\Gamma$-accessibility
$$\dim(\spn\{f_{1}, \ldots, f_{n-1},[g,f_{k}]\}) = n$$ 
for some $k$, is also a sufficient condition for flatness. Moreover, we prove that flat outputs can be obtained as \emph{independent first integrals} of the above field
$f_{k}$ in each neighborhood where the condition holds: this is a consequence of the equivalence to the Brunovsk\`{y} controllability canonical form where at least one of the controls, $u_k$, cannot appear in the first order derivatives of all components of the diffeomorphism but one, these $n-1$ components being therefore first integrals of the corresponding vector field $f_k$. 
This construction sheds a new light on a comparable result by Ph. Martin \cite{Martin93}, obtained by input-output  and structure at infinity considerations. 

Since the points where the above condition holds are defined by the independence of $n$ vectors, they are naturally said \emph{generic}. 

Again using a result of Ph. Martin \cite{MartinThese} clarifying the relationship between Lie-B\"{a}cklund isomorphism and dynamic feedback, we then show that our result can be interpreted in terms of (extended state) feedback linearization \cite{JakubczykRespondek80, Huntetal83} and draw some consequences on the set of intrinsic singularities. 

Let us also mention some complementary approaches, indirectly related to the present singularity study, on the existence of flat outputs in a neighborhood of a point, under assumptions on the differential weight or the prolongation length,  \eg, in  \cite{Resp-2003,NicolauThese,Nic-Resp-siam2017}. 

We proceed in our singularity study with the following question: \emph{Are there points of the state space where the dimension of the vector space generated by the control vector fields $f_{1}, \ldots, f_{n-1}$ drops down that are nevertheless apparent singularities?} Note that such points, if they exist, may be called \emph{non generic} for obvious reasons. We prove a theorem giving a new construction of flat outputs at such points, thus providing a positive answer to the previous question.

The above mentioned theorems and their consequences on flatness singularities constitute the main results of this paper. Though the first theorem, that gives a sufficient condition for flatness at generic points, was already known in a different perspective, our approach is here completely renewed compared to our previous paper \cite{KLO17} since it deals with distributions of vector fields for systems represented by explicit differential equations and provides direct and computable flat output constructions by first integrals, as can be seen in the three examples at the end of this paper. It is also interesting to remark that the result at non generic points (theorem~\ref{flat-non-gener-thm}) is still valid, as is, with $m\leq n-1$ inputs, as illustrated by the third example. We may also stress that, in these results, the singularities are not given in terms of singularities of the parameterization as in \cite{CE14,CE17,KLO17} but rather as singularities of distributions of vector fields.

The paper is organized as follows: the basics of control affine systems with $n$ states and $n-1$ controls as well as those on flatness singularities are recalled in sections~\ref{sec::model} and~\ref{sec:atlas}. Section~\ref{sec:intrinsic-gen} is then devoted to the singularity study at generic and non generic points. Three academic examples are then presented in section~\ref{sec:ex}, finally followed by concluding remarks in section~\ref{sec:concl}.

\section{Control Affine Systems with $\tens{n}$ States and $\tens{n}\bm{-1}$ Inputs}
\label{sec::model}

We consider a control affine system with drift given, in a local chart, by: 
\begin{equation}
\label{equ::explicit}
\dot{x} = f_{0}(x) + \sum_{j=1}^{n-1} u_{j} f_{j}(x)
\end{equation}
where the state $x$ evolves in a manifold $X$ of dimension $n \geq 2$,
with drift $f_{0}$, and with $m=n-1$ independent controls. 

We also make the following classical assumption:

The vector fields\footnote{The notations $f_{i}$, or $g$, etc. may be indifferently understood as usual vectors in prescribed local coordinates $(x_{1}, \ldots, x_{n})$, \ie 
$\left( f_{i,1},\ldots,f_{i,n}\right)^{T}$, or $\left( g_{1},\ldots, g_{n}\right)^{T}$, etc., the superscript $^{T}$ standing for transpose, or as the associated first order partial differential operators, \ie $\sum_{j=1}^{n}f_{i,j}\frac{\partial}{\partial x_{j}}$, or $\sum_{j=1}^{n}g_{j}\frac{\partial}{\partial x_{j}}$, etc.} $f_1, \cdots, f_{n-1}$ are assumed to be $C^\infty$
and linearly independent in a dense open set of $X$.  

In other words, there is a dense open set where the matrix 
\begin{equation}\label{Gdef:eq}
G(x) \triangleq \left(\begin{array}{ccc}f_{1}&\cdots&f_{n-1}\end{array}\right),
\end{equation} 
of size  $n\times (n-1)$, has full rank.


In the sequel, for simplicity's sake, we denote by $g$ the vector field in \eqref{equ::explicit}, pointwise defined by
\begin{equation}\label{gdef:eq}
g(x,u) \triangleq f_{0}(x) + \sum_{i=1}^{n-1} u_{i} f_{i}(x).
\end{equation}

\section{Recalls on the Infinite Order Jets Approach to Flat Systems with $\tens{n}\bm{-1}$ Inputs and Their Singularities }
\label{sec:atlas}

In this section, 
we briefly recall and adapt the
main background and tools, introduced and defined
in~\cite{KLO17}, to the
present context of systems with $n-1$ inputs. 

\subsection{The Formalism of Infinite Order Jets}

The definition of flatness introduced in \cite{FLMR99} 
requires the use of infinite order jets. More precisely, we
embed the manifold $X$ and the associated system~\eqref{equ::explicit} in the manifold  
$$\X \triangleq  X\times \R^{n-1}_{\infty} = X\times \R^{n-1} \times
\R^{n-1} \times \cdots$$ 
with coordinates 
$$(x,\overline{u}) \triangleq (x,u^{(0)},u^{(1)},u^{(2)},\ldots, u^{(k)}, \ldots ),$$  
endowed with the product topology.

In this topology, a  continuous (resp. differentiable) function from
$X \times \R^{n-1}_{\infty}$ to $\R$, by construction, \textit{only depends on a finite number of coordinates} and is continuous (resp. differentiable) with respect to these coordinates in the usual (finite dimensional)
sense. 

$\X =  X\times \R^{n-1}_{\infty}$ is also endowed with the Cartan vector field
\begin{equation}
\label{eq::cartan_field}
C_g = \sum_{i=1}^{n}g_{i}(x,\overline{u}) \frac{\partial}{\partial
  x_{i}} + \sum_{i=1}^{n-1} \sum_{j \geq 0} u_i^{(j+1)}
\frac{\partial}{\partial u_i^{(j)}}  \triangleq g(x,\overline{u}) \frac{\partial}{\partial x} + \sum_{j \geq 0} u^{(j+1)}
\frac{\partial}{\partial u^{(j)}}
\end{equation}
with $g$ defined by \eqref{gdef:eq}.

Considering $C_g$ as a first order differential operator and $h: \X\mapsto \R$ an arbitrary differentiable function,  interpreting the expression
$C_g h = \sum_{i=1}^{n}g_{i}(x,\overline{u}) \frac{\partial h}{\partial
  x_{i}} + \sum_{i=1}^{n-1} \sum_{j \geq 0} u_i^{(j+1)}
\frac{\partial h}{\partial u_i^{(j)}}$,
as the Lie derivative of $h$ along the vector field $g$ of $\TT\X$, the tangent bundle of $\X$,  this amounts to identify $C_g $ with the vector $\left( g, \dot{u}, \ddot{u}\ldots\right)$ and equation \eqref{equ::explicit} with the infinite
number of equations 
$$\dot{x} = g(x,\overline{u}), \quad \dot{u}^{(0)} = u^{(1)}, \quad  \ldots,
\quad \dot{u}^{(k)}=u^{(k+1)} ,\quad \ldots $$

\subsection{Lie-B\"acklund Equivalence}\label{L-B-equiv:sec}

Consider two systems:

\begin{equation}\label{sys-equiv:def}
\dot{x}=g(x,u) \quad \mathrm{and} \quad \dot{y}=\gamma(y,v)
\end{equation}
and their prolongations on $X\times \R^{n-1}_{\infty}$ and $Y\times
\R^{\mu}_{\infty}$ respectively 
with the associated Cartan fields, in condensed notations introduced in~\eqref{eq::cartan_field}:

\begin{equation}\label{sys-equiv-C:def}
C_{g}= g(x,u)\frac{\partial}{\partial x} + \sum_{j\geq 0}
  u^{(j+1)}\frac{\partial}{\partial u^{(j)}}, 
\quad
C_{\gamma}= \gamma(y,v)\frac{\partial}{\partial y} + \sum_{j\geq 0} v^{(j+1)}\frac{\partial}{\partial v^{(j)}}
\end{equation}

We say that they are  \emph{Lie-B\"acklund equivalent} at a pair of points $(x_0,\ol{u}_0)$ and $(y_0,\ol{v}_0)$ if there exist neighborhoods of these points where every integral curve of one is mapped into an integral curve of the other and conversely. 

In other words, the two systems are \emph{Lie-B\"acklund equivalent} at the points $(x_0,\ol{u}_0)$ and $(y_0,\ol{v}_0)$  if there exists neighborhoods ${\mathcal N}_{x_{0},\ol{u}_{0}} \subset X\times \R^{n-1}_{\infty}$ and ${\mathcal N}_{y_{0},\ol{v}_{0}} \subset Y\times \R^{\mu}_{\infty}$ and a $C^{\infty}$ isomorphism $\Phi : {\mathcal N}_{y_{0},\ol{v}_{0}} \rightarrow {\mathcal N}_{x_{0},\ol{u}_{0}}$ satisfying $\Phi(y_{0},\ol{v}_{0})= (x_0,\ol{u}_0)$, with $C^{\infty}$ inverse  $\Psi$, such that the respective Cartan fields are $\Phi$ and $\Psi$ related, \ie $\Phi_{\ast}C_{\gamma}=C_{g}$ in ${\mathcal N}_{x_{0},\ol{u}_{0}}$ and $\Psi_{\ast}C_{g}=C_{\gamma}$ in ${\mathcal N}_{y_{0},\ol{v}_{0}}$.

We recall, without proof, a most important result from \cite{MartinThese} (see also \cite{FLMR95,FLMR99,Levine09}) giving an interpretation of the Lie-B\"{a}cklund equivalence in terms of diffeomorphism and endogenous dynamic feedback, that will be useful in the next sections. We state it in the present context of systems with $n-1$ inputs for convenience, though the result is much more general.

\begin{thm}[Martin~\cite{MartinThese}]\label{endo-equiv:thm}
If the systems \eqref{sys-equiv:def} are Lie-B\"{a}cklund equivalent at a given pair of points, then (i) and (ii) must be satisfied:
\begin{itemize}
\item[(i)] $n-1=\mu$, \ie they must have the same number of independent inputs;
\item[(ii)]
there exist 
\begin{itemize}
\item an endogenous dynamic feedback\footnote{A dynamic feedback is said endogenous if, and only if, the closed-loop system and the original one are Lie-B\"{a}cklund equivalent, \ie if, and only if, the extended state $z$ can be locally expressed as a smooth function of $x$, $u$ and a finite number of time derivatives of $u$ (see \cite{MartinThese,FLMR95,FLMR99,Levine09}).} 
\begin{equation}\label{dynfeed:def}
u=\alpha(x,z,w), \quad  \dot{z}=\beta(x,z,w),
 \end{equation} 
\item a multi-integer\footnote{Recall that we denote by 
$v^{(r)} \triangleq \left( v_{1}^{(r_{1})}, \ldots, v_{n-1}^{(r_{n-1})} \right)= 
\left( \frac{\dd^{r_{1}}v_{1}}{\dd t^{r_{1}}}, \ldots, \frac{\dd^{r_{n-1}}v_{n-1}}{\dd t^{r_{n-1}}}\right)$.} 
$r \triangleq \left( r_{1}, \ldots, r_{n-1}\right)$,
\item and a local diffeomorphism  $\chi$, 
\end{itemize}
all defined in a neighborhood of the considered points, such that the closed-loop system 
\begin{equation}\label{CLdynfeed:eq}
\dot{x}=g(x,\alpha(x,z,w)), \quad \dot{z}=\beta(x,z,w)
\end{equation} 
is locally diffeomorphic to the extended one
\begin{equation}\label{CLdynfeed-equiv:eq}
\dot{y}= \gamma(y,v), \quad v^{(r)}=w
\end{equation} 
for all $w\in \R^{n-1}$, \ie 
\begin{equation}\label{extended-diffeo:eq}
(x,z)=\chi(y,v, \dot{v},\ldots,v^{(r-1)}), \qquad 
(y,v, \dot{v},\ldots,v^{(r-1)})=\chi^{-1}(x,z)
\end{equation}
and 
\begin{equation}\label{extended-diffeo-fields:eq}
\hat{g} =\chi_{\ast}\hat{\gamma}, \qquad
\hat{\gamma} = \chi^{-1}_{\ast}\hat{g}
\end{equation}
where we have denoted
$$
\begin{aligned}
&\hat{g}(x,z,w) \triangleq g(x,\alpha(x,z,w))\frac{\partial}{\partial x} + \beta(x,z,w)\frac{\partial}{\partial z}\\
&\hat{\gamma}(y,v, \dot{v}, \ldots, v^{(r-1)},w) \triangleq \gamma(y,v)\frac{\partial}{\partial y} + \sum_{j=0}^{r-2} v^{(j+1)}\frac{\partial}{\partial v^{(j)}} + w\frac{\partial}{\partial v^{(r-1)}}.
\end{aligned}
$$
\end{itemize}
\end{thm}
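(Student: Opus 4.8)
The plan is to reproduce Martin's argument, whose engine is the elementary fact that, in the product topology, a Lie-B\"acklund isomorphism $\Phi$ and its inverse $\Psi$ depend on only finitely many coordinates. Thus there is an integer $q$ with $x=\varphi(y,v,\dot v,\ldots,v^{(q)})$ and $u=u^{(0)}=\psi_{0}(y,v,\dot v,\ldots,v^{(q)})$, and then the intertwining identity $\Phi_{\ast}C_{\gamma}=C_{g}$ forces $u^{(j)}\circ\Phi=C_{\gamma}^{j}(u^{(0)}\circ\Phi)$, hence a function of the $Y$-side coordinates of order at most $q+j$; symmetrically, $\Psi$ writes $y$, $v$ and all their time derivatives as functions of $x$ and of finitely many of the $u^{(j)}$. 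I would record, once and for all, this "bounded order shift" produced by $\Phi$ and by $\Psi$.

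For (i) I would then argue by counting dimensions along the jet filtration. Let $d^{X}_{k}=n+(k+1)(n-1)$ and $d^{Y}_{k}=\dim Y+(k+1)\mu$ be the dimensions of the order-$k$ truncated jet spaces. The bounded-order-shift remark shows that $\Phi$, being the restriction of a diffeomorphism, induces for each $k$ a locally injective map from the order-$k$ coordinates of $Y\times\R^{\mu}_{\infty}$ into the order-$(k+c)$ coordinates of $X\times\R^{n-1}_{\infty}$ (injectivity because $\Psi$ supplies a one-sided inverse), so $d^{Y}_{k}\le d^{X}_{k+c}$; running the same argument with $\Phi$ and $\Psi$ exchanged gives $d^{X}_{k}\le d^{Y}_{k+c'}$. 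Dividing by $k$ and letting $k\to\infty$ yields $\mu\le n-1$ and $n-1\le\mu$, whence $\mu=n-1$.

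For (ii) the idea is to collapse the infinite-dimensional isomorphism to a finite-dimensional one by prolonging the target system. First I would fix a multi-integer $r=(r_{1},\ldots,r_{n-1})$ large enough that, after imposing $v^{(r)}=w$, both $x$ and $u$ are expressed through $(y,v,\dot v,\ldots,v^{(r-1)},w)$; concretely $r_{i}$ must dominate the highest order of $v_{i}$ occurring in $\varphi$ (plus one) and the highest order of $v_{i}$ occurring in $\psi_{0}$. This yields the extended system $\dot y=\gamma(y,v),\ v^{(r)}=w$ with state $\eta\triangleq(y,v,\dot v,\ldots,v^{(r-1)})$ and vector field $\hat\gamma$ as in the statement. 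On the $X$-side, $\varphi$ now reads as a map $\eta\mapsto x$ that is in general not invertible, so the key step is to \emph{complete} it: I would choose a complementary set of coordinates $z$, taken among smooth functions of the components of $\eta$, so that $\chi:\eta\mapsto(x,z)$ is a local diffeomorphism near the base points. Then $u=\psi_{0}$ becomes $\alpha(x,z,w)\triangleq\psi_{0}(\chi^{-1}(x,z),w)$, the dynamics $\dot z=\beta(x,z,w)$ is read off by pushing $\hat\gamma$ through $\chi$, and since $\chi$ sends trajectories of the extended system to trajectories of \eqref{equ::explicit} one gets $\chi_{\ast}\hat\gamma = g(x,\alpha(x,z,w))\frac{\partial}{\partial x}+\beta(x,z,w)\frac{\partial}{\partial z}=\hat g$, and its inverse, at once. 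Endogeneity of the feedback is exactly the second half of the finite-dependence remark: $\eta$ is a function of $x$ and finitely many $u^{(j)}$ through $\Psi$, hence so is $z$.

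The hard part will be the completion step. One must extract, purely from the invertibility of $\Phi$, the constant-rank information needed to guarantee (a) that $\varphi_{\ast}$ has full rank $n$ along the base point so that a complement $z$ exists, and (b) that a \emph{single} multi-integer $r$ works on an entire neighbourhood --- this is a regularity statement about the structure at infinity of the system, and it is precisely here that one needs $\Phi$ to be a diffeomorphism rather than merely smooth, together with careful bookkeeping relating the orders $r_{i}$ to the orders at which $\Psi$ recovers the $v^{(j)}$. Once that is in place, the remaining verifications are a mechanical translation of $\Phi_{\ast}C_{\gamma}=C_{g}$ into finite-dimensional language.
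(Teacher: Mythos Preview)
The paper does not prove this theorem: it is explicitly ``recall[ed], without proof'' from Martin's thesis \cite{MartinThese} (see the sentence immediately preceding the statement). There is therefore no paper proof to compare your attempt against.

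That said, your outline is the standard one and is essentially sound. The finite-dependence observation is exactly the engine, the dimension-count for (i) via the jet filtration is correct, and for (ii) the prolongation-plus-completion scheme is Martin's construction. You have also correctly located the genuine technical burden: establishing that $\varphi$ has constant rank $n$ near the base point (so that a complement $z$ exists and $\chi$ is a bona fide local diffeomorphism), and that a single multi-integer $r$ suffices uniformly on a neighbourhood. These are precisely the points where one must exploit that $\Phi$ is a Lie-B\"acklund \emph{isomorphism} and not merely a morphism; the details are in \cite{MartinThese} and are also treated in \cite{FLMR95,FLMR99,Levine09}.
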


\subsection{Flatness}

We say that system  \eqref{equ::explicit} is \emph{differentially flat} (or, more shortly, \emph{flat}) at the pair of points $(x_0,\ol{u}_0)$ and $\ol{y}_{0}$ if and only if, it is \emph{Lie-B\"{a}cklund equivalent to the trivial system $\R^{n-1}_{\infty}$ endowed with the trivial Cartan field
$$\tau =
\sum_{j\geq 0}\sum_{i=1}^{n-1} u_{i}^{(j+1)}\frac{\partial}{\partial u_{i}^{(j)}}$$
at the considered points}.

Otherwise stated, the locally defined flat output $y=\Psi(x, \ol{u})$ is such that $(x,\ol{u})= \Phi(\ol{y}) = (\Phi_0(\ol{y}), \Phi_1(\ol{y}), \Phi_2(\ol{y}), \ldots)$ with $\ddt{\Phi}_0(\ol{y})= g(\Phi_0(\ol{y}), \Phi_1(\ol{y}))$ for all sufficiently differentiable  $y$.

This definition immediately implies that a system is flat if there exists a generalized output $y= \Psi(x,\ol{u})$ of dimension $n-1$, thus depending at most on a finite number of derivatives of $u$, with independent derivatives of all orders, such that $x$ and $\ol{u}$ can be expressed in terms of $y$ and a finite number of successive derivatives, \ie $(x,\ol{u})= \Phi(\ol{y})$, and such that the system equation $\ddt{\Phi_{0}}(\ol{y})= g(\Phi(\ol{y}))$ is identically satisfied for all sufficiently differentiable $y$.

For a flat system, with the notations of subsection~\ref{L-B-equiv:sec}, the vector field $\gamma$, or $\widehat{\gamma}$ indifferently, corresponds  to the linear system in Brunovsk\'{y} canonical form
\begin{equation}\label{flat-CLdynfeed-equiv:eq}
y_{i}^{(r_{i}+1)}=w_{i}, \qquad i=1,\ldots,n-1,
\end{equation}
 $C_{\gamma} = \tau$ (with global coordinates $\ol{y}\triangleq (y,\dot{y},\ldots)$ in place of $\ol{u}$) and theorem~\ref{endo-equiv:thm} reads:
\begin{cor}
If system \eqref{equ::explicit}, with notation \eqref{gdef:eq}, is flat at a given point, it is dynamic feedback linearizable in a neighborhood of this point, \ie there exists an endogenous dynamic feedback of the form \eqref{dynfeed:def} and a local diffeomorphism $\chi$ such that the closed-loop system \eqref{CLdynfeed:eq} is transformed by $\chi$ into \eqref{flat-CLdynfeed-equiv:eq} for all $w\in \R^{n-1}$.
\end{cor}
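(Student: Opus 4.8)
The plan is to obtain the corollary as an immediate specialization of Martin's Theorem~\ref{endo-equiv:thm}, the only preparatory step being to rewrite the trivial system appearing in the definition of flatness as a control system of the form $\dot{y}=\gamma(y,v)$. First I would note that the trivial system $\R^{n-1}_{\infty}$ endowed with the Cartan field $\tau$ is exactly the prolongation of the pure integrator $\dot{y}=v$, with $y\in\R^{n-1}$ regarded as the state and $v\in\R^{n-1}$ as the control: setting $\gamma(y,v)\triangleq v$, the associated Cartan field $C_{\gamma}=v\frac{\partial}{\partial y}+\sum_{j\geq 0}v^{(j+1)}\frac{\partial}{\partial v^{(j)}}$ coincides, under the identification $y^{(1)}=v$ and $y^{(j+1)}=v^{(j)}$, with $\tau$ expressed in the global coordinates $\ol{y}=(y,\dot{y},\ddot{y},\ldots)$. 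Hence, by the very definition of flatness recalled above, system~\eqref{equ::explicit} is flat at the considered pair of points if and only if it is Lie-B\"acklund equivalent there to $\dot{y}=\gamma(y,v)$ with $\gamma(y,v)=v$.

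Then I would simply apply Theorem~\ref{endo-equiv:thm} with this choice of $\gamma$ (this is precisely what the sentence preceding the corollary anticipates, where $\gamma$ is identified with the Brunovsk\'{y} chain and $C_{\gamma}=\tau$). Part (i) yields $\mu=n-1$, which is consistent with the hypotheses; part (ii) provides an endogenous dynamic feedback $u=\alpha(x,z,w)$, $\dot{z}=\beta(x,z,w)$ of the form~\eqref{dynfeed:def}, a multi-integer $r=(r_{1},\ldots,r_{n-1})$, and a local diffeomorphism $\chi$, all defined near the considered points, such that the closed-loop system~\eqref{CLdynfeed:eq} is carried by $\chi$ onto the extended system~\eqref{CLdynfeed-equiv:eq}, namely $\dot{y}=v$, $v^{(r)}=w$. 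It remains only to observe that, since $\gamma(y,v)=v$, this extended system is literally the Brunovsk\'{y} canonical form~\eqref{flat-CLdynfeed-equiv:eq}: from $\dot{y}=v$ one gets $y^{(k)}=v^{(k-1)}$ for $k\geq 1$, so that $v_{i}^{(r_{i})}=w_{i}$ becomes $y_{i}^{(r_{i}+1)}=w_{i}$ for $i=1,\ldots,n-1$. Composing $\chi$ with the coordinate identification of the first step therefore transforms~\eqref{CLdynfeed:eq} into~\eqref{flat-CLdynfeed-equiv:eq} for all $w\in\R^{n-1}$, which is the assertion.

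The single point requiring care — and the only place where the argument is not purely formal — is the identification performed in the first step: one must check that the Lie-B\"acklund isomorphism $\Phi$, together with its inverse $\Psi$, witnessing flatness in the sense of the definition, is the same object as the isomorphism entering the hypothesis of Theorem~\ref{endo-equiv:thm}, i.e.\ that the relations $\Phi_{\ast}C_{\gamma}=C_{g}$, $\Psi_{\ast}C_{g}=C_{\gamma}$ and $\Phi(y_{0},\ol{v}_{0})=(x_{0},\ol{u}_{0})$ translate faithfully into $\Phi_{\ast}\tau=C_{g}$, $\Psi_{\ast}C_{g}=\tau$ and $\Phi(\ol{y}_{0})=(x_{0},\ol{u}_{0})$ under the coordinate change $(y,v,\dot{v},\ldots)\leftrightarrow(y,\dot{y},\ddot{y},\ldots)$. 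This is straightforward bookkeeping on the product manifolds $X\times\R^{n-1}_{\infty}$ and $Y\times\R^{n-1}_{\infty}$, and it is what makes the statement a genuine corollary of Martin's theorem (which is quoted here without proof) rather than a mere reformulation of it.
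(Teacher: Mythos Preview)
Your proposal is correct and follows exactly the route the paper intends: the corollary is stated without proof, the only justification being the sentence immediately preceding it, which identifies $\gamma$ (equivalently $\widehat{\gamma}$) with the Brunovsk\'{y} chain~\eqref{flat-CLdynfeed-equiv:eq} and $C_{\gamma}$ with $\tau$, after which Theorem~\ref{endo-equiv:thm} is invoked verbatim. Your write-up merely spells out this identification (trivial system $\leftrightarrow$ $\dot{y}=v$, hence $v^{(r)}=w$ becomes $y^{(r+1)}=w$) with more care than the paper does.
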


\subsection{Lie-B\"acklund Atlas}

The notion of a Lie-B\"acklund atlas for flat systems was initially introduced in~\cite{KLO17} in the context of implicit systems. Our presentation here adapts this definition to the case of systems in explicit form. It consists of a collection of charts on $\X$, that we call \emph{Lie-B\"acklund charts and atlas}, and that will allow us to define the notions of apparent and intrinsic singularities.

\begin{defn}\label{L-B-charts:def}
\begin{itemize}
\item[(i)] A Lie-B\"acklund chart on $\X$ is the data of a pair $(\mathfrak{U}, \psi)$ where $\mathfrak{U}$ is an open set of $\X$ and $\psi : \mathfrak{U} \rightarrow \R^{n-1}_{\infty}$ a local flat output,  with local inverse $\varphi: \mathfrak{V}\rightarrow \mathfrak{U}$, $ \mathfrak{V}\triangleq \psi(\mathfrak{U})$ being an open set of $\R^{n-1}_{\infty}$.
\item[(ii)] Two charts $(\mathfrak{U}_1,\psi_1)$ and $(\mathfrak{U}_2,\psi_2)$ are said to be compatible if, and only if, the mapping
$$\psi_{1}\circ \varphi_{2}: \psi_{2}(\varphi_{1}(\mathfrak{V}_1)\cap \varphi_2(\mathfrak{V}_2) ) \subset \R^{n-1}_{\infty} \rightarrow \psi_{1}(\varphi_{1}(\mathfrak{V}_1)\cap \varphi_2(\mathfrak{V}_2) ) \subset \R^{n-1}_{\infty}$$ 
with $\mathfrak{V}_i = \psi_{i}(\mathfrak{U}_i)$, $i=1,2$, is a local Lie-B\"acklund isomorphism (with the same trivial Cartan field $\tau$ associated to both the source and the target) with local inverse $\psi_{2}\circ \varphi_{1}$, as long as
$\varphi_{1}(\mathfrak{V}_1)\cap \varphi_2(\mathfrak{V}_2) \neq \emptyset$.

Note that charts with nonempty intersection are always compatible since the composition of Lie-B\"{a}cklund isomorphisms is a Lie-B\"{a}cklund isomorphism, or, otherwise stated, in reason of the transitivity of the Lie-B\"{a}cklund equivalence relation.
\item[(iii)] An atlas $\mathfrak{A}$ is a collection of compatible charts. 
\end{itemize}
\end{defn}

For a given atlas $\mathfrak{A} = (\mathfrak{U}_i,\psi_i)_{i \in I}$, let $\mathfrak{U}_\mathfrak{A}$ be the union $\mathfrak{U}_\mathfrak{A} = \bigcup_{i \in I} \mathfrak{U}_i$. 

\begin{rem}\label{charts:rem}
In definition \ref{L-B-charts:def}, we stress that Lie-B\"acklund isomorphisms play a similar role as the smooth diffeomorphisms appearing in the definition of a usual smooth manifold, at the exception that we do not require that $\mathfrak{U}_\mathfrak{A} = \X$. 
\end{rem}

\begin{rem}\label{triv-topo:rem}
The charts are made of open sets that are homeomorphic to open sets of $\left( \R^{n-1} \right)^N$ for some finite $N$, according to (i), and thus it is always possible to construct a larger atlas entirely made of topologically trivial (\ie contractible) open sets.
\end{rem}

\subsection{Apparent and Intrinsic Flatness Singularities}

It is clear from what precedes that if we are given two Lie-B\"acklund atlases, their union is again a Lie-B\"acklund atlas. Therefore the union of all charts that form every atlas is well-defined as well as its complementary, which we call the set of intrinsic flatness singularities, as stated in the next definition.

\begin{defn}
\label{def::intrinsic-singularities}
We say that a point in $\X$ is an intrinsic flatness singularity if it is excluded from all charts of every Lie-B\"acklund atlas. Every other singular point, namely every point $\ol{x}\not\in \mathfrak{U}_i$ for some chart $(\mathfrak{U}_i,\psi_i)$ but for which there exists another chart $(\mathfrak{U}_j,\psi_j)$, $j\neq i$, such that $\ol{x}\in \mathfrak{U}_j$, is called apparent.
\end{defn}

Clearly, this notion does not depend on the choice of atlas and charts. The concrete meaning of this notion is that at points that are intrinsic singularities there is no flat output, i.e. the system is not flat at these points. 

On the other hand, points that are apparent singularities are singular for a given set of flat outputs, but well defined points for another set of flat outputs defined in another chart containing these points.

\section{Intrinsic Flatness Singularity Study of Control Affine Systems with $\tens{n}\bm{-1}$ Inputs}
\label{sec:intrinsic-gen}

\subsection{On Flat Output Computation and Lie-B\"acklund Atlas Construction at Generic Points}
\label{flatcomp-gen:subsec}

We start this section with the singularity study (and thus the flat output computation) at points $x\in X$ such that the vector space generated by the control vector fields $f_{1}(x), \ldots, f_{n-1}(x)$ has dimension equal to $n-1$ and remains constant in a suitable open neighborhood of $x$. We call these points \emph{generic} for obvious reasons. We first prove that these generic points are such that the $\Gamma$-accessibility rank condition (see \cite{HK77,JS72}) is satisfied, or equivalently that the first approximation of the system around germs of integral curves passing through these points is controllable (see \cite{SM67,K}), 
and give a first sufficient condition for the existence of flat outputs at these points. We then construct the associated Lie-B\"{a}cklund charts and atlas.

\subsubsection{$\Gamma$-accessibility Rank Condition of Affine Systems with $\tens{n}\bm{-1}$ Inputs}\label{SARC:subsubsec}

 Let us recall the classical Lie bracket notations: 
 $[\eta,\gamma]\triangleq \frac{\partial \gamma}{\partial x}\eta -\frac{\partial \eta}{\partial x}\gamma$ 
denotes the Lie bracket of the vector
fields $\eta$ and $\gamma$, and, iteratively, $\ad_{\eta}\gamma
\triangleq [\eta,\gamma]$, $\ad^{k}_{\eta}\gamma=[\eta,
  \ad^{k-1}_{\eta}\gamma]$, with $\ad^{0}_{\eta}\gamma=\gamma$, for
all $k\geq 0$. We also denote  the $n\times (n-1)$ matrix $\ad^{k}_{g} G$ by
\begin{equation}\label{adkgdef:eq}
\ad^{k}_{g} G \triangleq \left(
\begin{array}{ccc}\ad^{k}_{g} f_{1}&\cdots&\ad^{k}_{g} f_{n-1}\end{array}\right),
\end{equation} 
with $G$ defined by \eqref{Gdef:eq}, and the $n\times (k+1)(n-1)$ matrix $\Gg_k$, for all $k\geq 1$, by
\begin{equation}\label{GGdef:eq}
\Gg_k \triangleq \left( \begin{array}{cccc}
G&-\ad_{g} G&\cdots&(-1)^{k}\ad_{g}^{k} G
\end{array}
\right),
\end{equation} 
that may be interpreted as the Wronskian matrix of $G$ (see \cite{SM67}), where the successive time derivative operators $\frac{\dd^{k}}{\dd t^{k}}$ are replaced by the iterated Lie bracket operators $(-1)^{k}\ad_{g}^{k}$. 
%
Note that, for linear
systems, $\Gg_n$ is often called the Kalman controllability matrix (see
\eqref{brack-g-fk:eq} in the proof of
lemma~\ref{lemma::controllability_at_first_oder}).

\begin{defn}\label{def:gam_contr_rk}
Given the following sequence of distributions:
\begin{equation}\label{Gammak:def}
\Gamma_{0} \triangleq
\spn\{f_1, \cdots, f_{n-1}\}, \qquad 
\Gamma_{k+1} \triangleq \Gamma_{k} + \ad_{g}\Gamma_{k}, \quad k\geq 0,
\end{equation}
we denote by $\Gamma_{k}(x,u)= \{\gamma(x,u) : \gamma\in \Gamma_{k}\}$.
We say that the \emph{$\Gamma$-accessibility rank condition} is satisfied at the point $(x_{0},u_{0})$ if, and only if, there exists $k^{\star} \in \N$ such that $\Gamma_{k^{\star}}(x_{0},u_{0}) = \Gamma_{\infty}(x_{0},u_{0}) = \TT_{x_{0},u_{0}}X$, \ie if $\dim\Gamma_{k^{\star}}(x_{0},u_{0}) = \dim \Gamma_{\infty}(x_{0},u_{0}) =n$.
\end{defn} 

\begin{rem}$\Gamma_{k^{\star}}$ is thus the Lie ideal generated by $f_1, \cdots, f_{n-1}$ in the Lie algebra generated by $g, f_1, \cdots, f_{n-1}$.
\end{rem}

\begin{defn}\label{def:strong_contr_rk}
Consider the other sequence of distributions
\begin{equation}\label{Dk-seq:def}
\Dd_{k+1} = \ol{\Dd}_{k} + \ad_{f_{0}} \ol{\Dd}_{k}, \quad k\geq 0, \quad \Dd_{0} = \Gamma_{0}
\end{equation}
where $\ol{\Dd}_{k}$ is the involutive closure of  $\Dd_{k}$ (see \eg \cite{Isidori86}).
The condition $\dim \Dd_{\infty}(x_{0}) = n$ is called the \emph{strong accessibility rank condition} \cite{HK77}.
\end{defn} 
\begin{prop}\label{prop:strong_contr_rk}
We have $\Gamma_{k}\subset \Dd_{k} $ for all $k$ and $u\in \R^{m}$ and $\Gamma_{\infty}(x_{0},u_{0}) = \Dd_{\infty}(x_{0})$ for all $u_{0}$ in a dense subset of $\R^{m}$. Moreover, $\dim \Gamma_{\infty} (x_{0},u_{0}) = n$ implies $\dim \Dd_{\infty} (x_{0}) = n$. In other words, 
the $\Gamma$-accessibility rank condition implies the strong accessibility rank condition.
\end{prop}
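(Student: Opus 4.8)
The plan is to establish the three claims of the proposition in sequence, working from the definitions \eqref{Gammak:def} and \eqref{Dk-seq:def}. The underlying intuition is that $\Dd_k$ is obtained from $\Dd_0=\Gamma_0$ by the \emph{$u$-independent} bracketing with $f_0$ together with taking involutive closures, whereas $\Gamma_k$ is obtained by bracketing with the \emph{$u$-dependent} field $g=f_0+\sum_j u_j f_j$; and the difference between $\ad_g$ and $\ad_{f_0}$ is a combination of brackets with the $f_j$, which already lie in $\Gamma_0\subset\ol{\Dd}_k$.

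\textbf{Step 1: the inclusion $\Gamma_k\subset\Dd_k$ for all $k$ and all $u\in\R^m$.} I would argue by induction on $k$. The base case $k=0$ is the definition $\Dd_0=\Gamma_0$. For the inductive step, suppose $\Gamma_k\subset\Dd_k$, hence $\Gamma_k\subset\ol{\Dd}_k$ since $\Dd_k\subset\ol{\Dd}_k$. Take $\gamma\in\Gamma_k$ (a vector field, possibly $u$-dependent); then $\ad_g\gamma = \ad_{f_0}\gamma + \sum_{j=1}^{n-1} u_j\,\ad_{f_j}\gamma - (C_{g}\text{-type terms in the }\dot u)$. Here I must be slightly careful: on $\X$ a vector field in $\Gamma_k$ may depend on $u$ and its derivatives, so $[g,\gamma]$ computed with the Cartan field picks up a term $-(\partial\gamma/\partial\ol u)\,\dot{\ol u}$-type contribution; but since the elements of $\Gamma_k$ are, by construction, iterated brackets of the $f_i$ with $g$, each such bracket is again of the form "a combination, with coefficients polynomial in $u$ and its derivatives, of iterated brackets of the $f_i$ among themselves", so the relevant spatial directions always stay in the distribution generated by iterated $f_0$- and $f_j$-brackets of the $f_i$. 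The key algebraic point is then: $\ad_{f_0}\gamma\in\ad_{f_0}\ol{\Dd}_k\subset\Dd_{k+1}$; and $\ad_{f_j}\gamma$ is a bracket of two elements of $\ol{\Dd}_k$ (since $f_j\in\Gamma_0\subset\ol{\Dd}_k$ and $\gamma\in\ol{\Dd}_k$), hence lies in $\ol{\Dd}_k\subset\Dd_{k+1}$ because $\ol{\Dd}_k$ is involutive. Collecting terms, $\ad_g\gamma\in\Dd_{k+1}$, so $\ad_g\Gamma_k\subset\Dd_{k+1}$, and with $\Gamma_k\subset\Dd_k\subset\Dd_{k+1}$ we get $\Gamma_{k+1}=\Gamma_k+\ad_g\Gamma_k\subset\Dd_{k+1}$.

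\textbf{Step 2: generic equality of the limits.} From Step 1, $\Gamma_\infty(x_0,u_0)\subset\Dd_\infty(x_0)$ for every $u_0$. For the reverse inclusion on a dense set of $u_0$, I would observe that, evaluated at a fixed point $x_0$, the elements of $\Gamma_k$ span a subspace of $\TT_{x_0}X$ whose dimension, as a function of $u_0$ (and the jet variables, which one may freely specialize, \eg to zero), is a lower-semicontinuous function given by the rank of a matrix whose entries are polynomial in $u_0$; hence the dimension attains its maximum on the complement of a proper algebraic subset, \ie on a dense open set of $\R^m$. On that dense set, I claim $\Gamma_\infty(x_0,u_0)=\Dd_\infty(x_0)$: indeed, $\Dd_\infty$ is (pointwise) the smallest involutive distribution containing $\Gamma_0$ and invariant under $\ad_{f_0}$, while the generic value of $\Gamma_\infty$ is an involutive distribution (it is a Lie ideal, by the Remark following Definition~\ref{def:gam_contr_rk}, hence in particular involutive) containing $\Gamma_0$, and one checks it is $\ad_{f_0}$-invariant: for generic $u_0$, $\ad_{f_0}=\ad_g-\sum_j u_j\ad_{f_j}$ maps $\Gamma_\infty$ into itself since $\ad_g\Gamma_\infty\subset\Gamma_\infty$ (stationarity) and $\ad_{f_j}\Gamma_\infty\subset\Gamma_\infty$ ($\Gamma_\infty\supset\Gamma_0\ni f_j$ and $\Gamma_\infty$ is an ideal). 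By minimality of $\Dd_\infty$ we get $\Dd_\infty(x_0)\subset\Gamma_\infty(x_0,u_0)$, giving equality. The third assertion is then immediate: if $\dim\Gamma_\infty(x_0,u_0)=n$ for \emph{some} $u_0$, then by Step 1 $\dim\Dd_\infty(x_0)\ge n$, hence $=n$; and this is exactly the statement that the $\Gamma$-accessibility rank condition at $(x_0,u_0)$ implies the strong accessibility rank condition at $x_0$.

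\textbf{Main obstacle.} I expect the delicate point to be Step 1, specifically handling the fact that the brackets are taken with the Cartan field $C_g$ on the infinite jet space $\X$ rather than with an honest finite-dimensional vector field, so that elements of $\Gamma_k$ genuinely depend on $\ol u$ and $\ad_g$ produces extra $\dot{\ol u}$-directional terms. The resolution is bookkeeping: one proves by the same induction that every element of $\Gamma_k$ is a finite sum $\sum_\alpha c_\alpha(x,\ol u)\,\xi_\alpha(x)$ where each $\xi_\alpha$ is an iterated bracket (of length $\le k$) of $f_0,f_1,\dots,f_{n-1}$ built with at most $k$ occurrences of $f_0$, and the $c_\alpha$ are polynomial in $\ol u$; then $\ad_g$ of such an element lands in the same class with $k$ replaced by $k+1$, and the extra $\dot{\ol u}$-terms only change the coefficients $c_\alpha$, not the spatial fields $\xi_\alpha$, so they do not affect the pointwise span and the inclusion into $\Dd_{k+1}$ goes through unchanged. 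The second potential subtlety — that the generic rank argument requires the map $u_0\mapsto\dim\Gamma_\infty(x_0,u_0)$ to stabilize after finitely many steps uniformly enough — is handled by noting that on any fixed finite-dimensional slice the ascending chain $\Gamma_0(x_0,u_0)\subset\Gamma_1(x_0,u_0)\subset\cdots$ of subspaces of the $n$-dimensional space $\TT_{x_0}X$ must stabilize in at most $n$ steps, so only $\Gamma_0,\dots,\Gamma_n$ are relevant and all rank computations are genuinely algebraic.
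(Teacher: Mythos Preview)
Your argument is correct and follows the same route as the paper's own proof, which is essentially a one-line observation: since $g=f_0+\sum_j u_j f_j$, the bracket $\ad_g$ differs from $\ad_{f_0}$ by a combination of $\ad_{f_j}$'s, and these preserve $\ol{\Dd}_k$ by involutivity; the inclusion and the generic equality then follow. You have simply supplied the induction and the genericity argument that the paper leaves implicit.

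One remark: your discussion under ``Main obstacle'' about the Cartan field on $\X$ and $\dot{\ol u}$-terms is more caution than is needed here. In the paper the brackets defining $\Gamma_k$ are taken on $X$ with $u$ treated as a parameter (see the computation of $\ad_g G$ in the proof of Lemma~\ref{lemma::controllability_at_first_oder}), so no extra jet-direction terms appear; your induction in Step~1 then goes through directly with $\ad_g\gamma=\ad_{f_0}\gamma+\sum_j u_j\,\ad_{f_j}\gamma$ for $\gamma$ a vector field on $X$, possibly with $u$-dependent coefficients. Your Step~2 minimality argument (that the generic $\Gamma_\infty$ is involutive, contains $\Gamma_0$, and is $\ad_{f_0}$-stable, hence contains $\Dd_\infty$) is a clean way to phrase what the paper asserts without justification.
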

\begin{proof}
Since $g= f_{0}+\sum_{i=1}^{n-1}u_{i}f_{i}$, we indeed have $\Gamma_{k}\subset \Dd_{k} $ for all $k$ and $u\in \R^{m}$ and $\Gamma_{\infty}(x_{0},u_{0}) = \Dd_{\infty}(x_{0})$ for all $u_{0}$ in a dense subset of $\R^{m}$.
 Again, since $\Gamma_{k}\subset \Dd_{k} $ for all $k$ and $u\in \R^{m}$, we immediately deduce that $\Gamma$-accessibility implies strong accessibility.
\end{proof}
\begin{rem}
If the distributions \eqref{Gammak:def} have constant rank in a neighborhood of $(x_{0},u_{0})$, it can be readily verified that $k^{\star}$, defined by definition~\ref{def:gam_contr_rk}, satisfies $k^{\star} \leq n-1$. If, on the contrary, the rank of some of these distributions drops down at $(x_{0},u_{0})$, then it is possible that $k^{\star} > n$.
\end{rem}

\begin{thm}\label{access_rk_cond:thm}
Every flat system at a point satisfies the $\Gamma$-accessibility rank condition (and thus the strong accessibility rank condition) at this point.
\end{thm}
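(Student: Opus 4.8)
The plan is to show that the negation of the $\Gamma$-accessibility rank condition is an obstruction to flatness, by exploiting the Lie-B\"acklund equivalence to the trivial system and the characterization of $\Gamma_{\infty}$ as a Lie ideal. The key observation is that the distributions $\Gamma_k$ are built intrinsically from the Cartan field $C_g$: indeed, $\Gamma_0 = \spn\{f_1,\ldots,f_{n-1}\}$ is exactly the image, projected onto $\TT X$, of the directions $\partial/\partial u_i^{(0)}$ transported by $C_g$ (since $[C_g, \partial/\partial u_i^{(0)}]$ has $-f_i$ as its $x$-component), and the operation $\Gamma_{k+1} = \Gamma_k + \ad_g \Gamma_k$ corresponds to further brackets with $C_g$. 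So the sequence $(\Gamma_k)$, and in particular the rank of $\Gamma_{\infty}(x_0,u_0)$, is an invariant of the Cartan field structure restricted to $X$, hence preserved under Lie-B\"acklund isomorphism up to the identification of the relevant subspaces.

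First I would compute, for the trivial system $\R^{n-1}_{\infty}$ with Cartan field $\tau$, what the analogue of the $\Gamma_k$ sequence is: here the ``drift plus controls'' structure is $\dot y_i^{(j)} = y_i^{(j+1)}$, and a direct computation of the iterated brackets of $\tau$ with the control directions shows that the corresponding accessibility distribution spans the full tangent space to the (finite-dimensional truncation of the) state manifold — in other words, the trivial system trivially satisfies the rank condition. Then, using Theorem~\ref{endo-equiv:thm} (Martin), a flat system is, after an endogenous dynamic feedback $u = \alpha(x,z,w)$, $\dot z = \beta(x,z,w)$, diffeomorphic via $\chi$ to the Brunovsk\'y form \eqref{flat-CLdynfeed-equiv:eq}. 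The Brunovsk\'y form manifestly satisfies the strong accessibility (hence $\Gamma$-accessibility) rank condition, and this property is invariant under the diffeomorphism $\chi$ and under the endogenous feedback, because an endogenous feedback does not change the Lie-B\"acklund equivalence class and the accessibility distribution is constructed from the Cartan field alone. Pulling back along $\chi$ and the feedback, one concludes $\dim \Gamma_{\infty}(x_0,u_0) = n$ for the original system at the given point. The last sentence of the theorem (strong accessibility) then follows immediately from Proposition~\ref{prop:strong_contr_rk}.

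The main obstacle I anticipate is the bookkeeping needed to make precise the claim that the $\Gamma_k$ distributions are preserved under Lie-B\"acklund isomorphism. The subtlety is that $C_g$ lives on the infinite-dimensional manifold $\X$ and a Lie-B\"acklund isomorphism does not respect any particular finite-dimensional truncation; one must argue that, although $\Phi$ mixes $x$ with derivatives of $u$, the quotient construction that produces $\Gamma_{\infty}$ (as the image in $\TT_{x_0}X$ of an intrinsically defined subspace of $\TT_{(x_0,\ol u_0)}\X$, namely the span of all iterated brackets of $C_g$ with the vertical control directions) is genuinely invariant. A clean way to handle this is to prove the implication directly: assuming flatness, use the endogenous feedback picture of Theorem~\ref{endo-equiv:thm} to reduce to the closed-loop system \eqref{CLdynfeed:eq}, show that adding an endogenous feedback can only enlarge (never shrink) the accessibility distribution at a point — because the feedback is itself expressible through $x,u$ and derivatives — and then verify the rank condition directly on the extended Brunovsk\'y form, where it is a finite linear-algebra computation (the Kalman matrix of a chain of integrators has full rank). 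This sidesteps the need for a fully general invariance statement on $\X$ and keeps the argument to manipulations one can carry out explicitly.
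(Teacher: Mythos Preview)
Your strategy is essentially the paper's: invoke Martin's theorem to pass to the closed-loop extended system $(x,z)$ with input $w$, observe that its diffeomorphic image (the Brunovsk\'y chain) has full accessibility rank, transport this back via $\chi$, and then compare the extended system's accessibility distribution $\hat\Gamma_\infty\subset\TT_{(x_0,z_0)}(X\times Z)$ with the original $\Gamma_\infty\subset\TT_{x_0}X$. The paper's proof does exactly this, phrasing the last step as a contradiction: assuming $\dim\Gamma_{k^\star}(x_0,u_0)<n$, there is a nonzero $d\xi\in\Gamma_{k^\star}^{\perp}$; since the $\TT X$-projection of $\hat\Gamma_\infty$ is contained in $\Gamma_{k^\star}$, this $d\xi$ also annihilates $\hat\Gamma_\infty$, and pulling it back through $\chi^{-1}$ yields a function on the Brunovsk\'y side independent of all top inputs $y_i^{(r_i)}$, contradicting controllability there.

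One correction: your phrase ``adding an endogenous feedback can only \emph{enlarge} the accessibility distribution'' is stated backwards. Passing from the open-loop system to the closed-loop extended system restricts the admissible $u$'s to those of the form $\alpha(x,z,w)$, so the $X$-projection of the closed-loop accessibility can only be \emph{contained in} the open-loop one: $\mathrm{pr}_{\TT X}\hat\Gamma_\infty\subset\Gamma_\infty$. This is precisely the inclusion the paper asserts (and the reason your own justification ``the feedback is expressible through $x,u$ and derivatives'' is relevant). The logic you need is then: $\hat\Gamma_\infty=\TT(X\times Z)$ (from Brunovsk\'y via $\chi$), hence $\mathrm{pr}_{\TT X}\hat\Gamma_\infty=\TT X$, hence $\Gamma_\infty=\TT X$. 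With this direction fixed, your outline and the paper's proof coincide; the paper is equally terse about why $\mathrm{pr}_{\TT X}\hat\Gamma_\infty\subset\Gamma_{k^\star}$ holds, so the inductive verification you allude to (control fields of $\hat g$ project into $\Gamma_0$, and each bracket with $\hat g$ raises the index by at most one because $\hat g$ acts on $x$ through $g(x,\alpha)$ and the extra $z$- and $w$-dependent terms contribute only $\Gamma_0$-directions) is exactly what is needed to fill that line.
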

\begin{proof}
Given a flat system at a point, we consider the associated equivalent linear system~\eqref{flat-CLdynfeed-equiv:eq} which is indeed controllable. The Kalman controllability matrix of this system has constant rank $n' \triangleq \sum_{i=1}^{n-1} r_{i}$, with the notations of theorem~\ref{endo-equiv:thm}. As already noted, to this matrix, there corresponds the increasing sequence of distributions generated by the control vector fields $\hat{\Gamma}_{0}\triangleq \spn \{\chi_{\ast}\left( \frac{\partial}{\partial y_{i}^{(r_{i})}} \right), i=1, \ldots, n-1\}$ and their iterated Lie brackets with the vector field $\hat{g}$, defined by \eqref{extended-diffeo-fields:eq},  \ie 
$\hat{\Gamma}_{k+1} \triangleq \hat{\Gamma}_{k} + \ad_{\hat{g}}\hat{\Gamma}_{k}$, for $k\geq 0$,
analogously to the construction  \eqref{Gammak:def}. According to the properties of the image of the Lie bracket by diffeomorphism, it results that the distributions $\hat{\Gamma}_{k}$ must have locally a constant dimension and that the largest one, say $\hat{\Gamma}_{\infty}$, must have constant dimension equal to $n'$. We also note that since $\chi$ is a local diffeomorphism satisfying $(x,z) = \chi (y,\dot{y},\ldots, y^{(r-1)})$, we indeed have $n'\geq n$. We  thus assume that the $\Gamma$-accessibility rank condition is not satisfied or, equivalently, that $\dim\Gamma_{k^{\star}}(x_{0},u_{0}) < n$. Since, by construction, the projection on $\TT_{X}$ of $\hat{\Gamma}_{\infty}$ is contained in $\Gamma_{k^{\star}}$, there must exist at least a non 0 combination of the $x_{j}$'s, $j \in \{1, \ldots, n-1\}$, denoted by $\xi$, such that $d\xi\in \left(\hat{\Gamma}_{\infty}\right)^{\perp}$. But then an immediate computation shows that $d\chi^{-1}(\xi)$ must be independent of the inputs $y_{i}^{(r_{i})}$ for all $i= 1, \ldots, n-1$, which contradicts the controllability of system~\eqref{flat-CLdynfeed-equiv:eq}, hence the result.
\end{proof}

A simple interpretation of the $\Gamma$-accessibility rank condition may be given in terms of controllability of the first order time-varying linear approximation of the system:
\begin{defn}\label{def:1st-order_contr-traj}
System \eqref{equ::explicit} is said controllable at the
first order with constant input at $(x_{0},u_{0})\in X\times \R^{n-1}$, or first order controllable with constant input $u_{0}\in \R^{n-1}$, at a point $x_{0} \in X$ 
if, and only if, its tangent linear approximation along the integral curve $(x(t),u_{0})$ passing through $(x_0, u_0)$ at time $t=0$, and with constant input $u(t)=u_{0}$ for all $t$ in a small interval of time containing 0, namely
$\delta \dot{x} = \frac{\partial g}{\partial x}(x(t),u_{0})\delta x +
\sum_{i=1}^{n-1} f_{i}(x(t))\delta u_{i}$, is controllable
in the sense of linear time-varying systems (see e.g. \cite{SM67,K}).  
\end{defn}

\begin{lem}
\label{lemma::controllability_at_first_oder}
The system is controllable at the first order with constant input at $(x_0,u_0)$ if, and
only if, in an open neighborhood of $(x_0,u_0)$, there exists $k^{\star}\in \N$ such that: 
\begin{equation}\label{eq::gen_first_order_controllability}
\rk{\Gg_{k^{\star}}}(x_0,u_0)= n
\end{equation}
or, equivalently, if, and only if, the $\Gamma$-accessibility rank condition is satisfied at $(x_0,u_0)$.

Moreover, if in an open neighborhood of $(x_{0},u_{0})$ there exists
$k \in \{0,\ldots,n-1\}$ such that 
\begin{equation}
\label{eq::first_order_controllability}
\dim(\spn\{f_{1}, \ldots, f_{n-1},[g,f_{k}]\}) = n 
\end{equation}
then condition \eqref{eq::gen_first_order_controllability} is satisfied.
\end{lem}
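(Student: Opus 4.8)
The plan is to establish the chain of equivalences by recognizing the matrices $\Gg_k$ and the distributions $\Gamma_k$ as two encodings of the same object. First I would treat the equivalence between first-order controllability with constant input $u_0$ and the rank condition $\rk \Gg_{k^\star}(x_0,u_0) = n$. Fixing the constant input $u_0$, the tangent linear system of Definition~\ref{def:1st-order_contr-traj} is a linear time-varying system $\delta\dot x = A(t)\delta x + B(t)\delta u$ with $A(t) = \frac{\partial g}{\partial x}(x(t),u_0)$ and $B(t) = G(x(t))$, evaluated along the integral curve $x(t)$ of the constant-input vector field $g(\cdot,u_0)$. The classical controllability criterion for analytic (or here $C^\infty$ with the appropriate constant-rank caveat) time-varying linear systems is that the generalized controllability matrix built from $B$, $\dot B - AB$, and iterated operations $M_{j+1} = \dot M_j - A M_j$ has full rank at $t=0$. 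The key computational identity is that, along $x(t)$, the operator $M \mapsto \dot M - AM$ applied to a matrix whose columns are vector fields evaluated at $x(t)$ coincides with $(-1)$ times the Lie bracket $\ad_{g(\cdot,u_0)}$ applied columnwise, evaluated at $x(t)$. This is exactly the ``Wronskian'' interpretation flagged after \eqref{GGdef:eq}; granting it, the columns of the $j$-th block of the generalized controllability matrix at $t=0$ are precisely $(-1)^j \ad_g^j f_i(x_0,u_0)$, so that matrix is $\Gg_{k^\star}(x_0,u_0)$ and the two conditions coincide.

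Next I would identify $\rk \Gg_{k^\star}(x_0,u_0) = n$ with the $\Gamma$-accessibility rank condition of Definition~\ref{def:gam_contr_rk}. By induction on $k$ one shows $\spn\{\text{columns of the first } k+1 \text{ blocks of } \Gg_k\}(x_0,u_0) = \Gamma_k(x_0,u_0)$: the base case $k=0$ is $\spn\{f_1,\dots,f_{n-1}\} = \Gamma_0$ by definition, and the inductive step is immediate from $\Gamma_{k+1} = \Gamma_k + \ad_g\Gamma_k$ together with the block structure $\Gg_{k+1} = (\Gg_k \mid (-1)^{k+1}\ad_g^{k+1}G)$ and the identity $\ad_g\ad_g^j f_i = \ad_g^{j+1}f_i$. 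Hence $\rk\Gg_{k^\star}(x_0,u_0) = \dim\Gamma_{k^\star}(x_0,u_0)$, and full rank $n$ for some $k^\star$ is exactly the $\Gamma$-accessibility rank condition. One small point to state carefully: the definition asks not merely for $\dim\Gamma_{k^\star} = n$ but that this stabilizes, which is automatic since the $\Gamma_k$ are increasing and bounded above by $\TT_{x_0,u_0}X$, so once the dimension hits $n$ it is constant thereafter.

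For the ``moreover'' part, suppose \eqref{eq::first_order_controllability} holds in a neighborhood of $(x_0,u_0)$: $\dim\spn\{f_1,\dots,f_{n-1},[g,f_k]\} = n$ for some $k \in \{0,\dots,n-1\}$. Since $f_1,\dots,f_{n-1} \in \Gamma_0$ and $[g,f_k] = \ad_g f_k \in \ad_g\Gamma_0 \subset \Gamma_1$, all $n$ of these vector fields lie in $\Gamma_1$, so $\dim\Gamma_1(x_0,u_0) = n$, i.e.\ the $\Gamma$-accessibility rank condition holds with $k^\star = 1$; by the equivalence just proved, \eqref{eq::gen_first_order_controllability} follows (with $k^\star = 1$, hence \emph{a fortiori} for the stated $n-1$ as well).

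The step I expect to be the main obstacle is the first equivalence — specifically, justifying the controllability criterion for the time-varying linear approximation and the Lie-bracket/$\dot M - AM$ identity in the $C^\infty$ (rather than analytic) category, where the generalized-controllability-matrix test is only sufficient in general and requires a constant-rank hypothesis on the $\Gg_k$ near $(x_0,u_0)$ to also be necessary. This is precisely why the lemma statement says ``in an open neighborhood of $(x_0,u_0)$, there exists $k^\star$'' rather than just ``at $(x_0,u_0)$'', and I would lean on the references \cite{SM67,K} for the precise version of the criterion, limiting my own contribution to the algebraic bracket computation and the matching with the $\Gamma_k$ filtration.
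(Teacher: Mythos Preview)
Your proposal is correct and follows essentially the same route as the paper: identify the tangent linear system $\delta\dot x = A(t)\delta x + B(t)\delta u$ along the constant-input trajectory, invoke the time-varying controllability criterion from \cite{SM67,K}, verify the key identity $\ad_g G = -(A-\frac{\dd}{\dd t})B$ (and its iterates) so that the Silverman--Meadows matrix at $t=0$ coincides with $\Gg_{k^\star}(x_0,u_0)$, and deduce the ``moreover'' clause by observing that the hypothesis forces $\rk\Gg_1 = n$. You are slightly more explicit than the paper in spelling out the column-span identity $\Gamma_k(x_0,u_0) = \spn\{\text{columns of }\Gg_k\}(x_0,u_0)$ and in flagging the $C^\infty$ versus analytic subtlety for the necessity direction, both of which are welcome; the paper leaves the former as implicit in the definitions and handles the latter simply by citing \cite{SM67,K}.
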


\begin{proof}
The tangent linear system along the integral curve $t\mapsto (x(t),u_{0})$, with constant $u_{0}\triangleq \left( u_{1,0},\ldots, u_{n-1,0}\right)$ for $t$ in a given open interval $I$ containing $0$, is given by: $\delta \dot{x} = A(t) \delta x + B(t) \delta u$, where 
$$A(t) \triangleq \frac{\partial f_0}{\partial x}(x(t)) + \sum_{i=1}^{n-1} u_{i,0} \frac{\partial f_i}{\partial x}(x(t)), \qquad B(t) \triangleq G(x(t)).$$ 
According to \cite{SM67,K}, this linear time-varying system is controllable at $(x_{0},u_{0})$ if, and only if the controllability matrix 
$$\Cc(0) \triangleq \left( B(t),(A(t) - \ddt)B(t),\cdots,(A(t) - \ddt)^{k^{\star}}B(t)\right)_{\mid t=0}$$
 has rank $n$ for some $k^{\star}$. 

 On the other hand, we have, using \eqref{gdef:eq},  in matrix notation:
 $$\ad_{g} f_{k}=
 \frac{\partial f_{k}}{\partial x} 
  \left( 
 f_{0}+ \sum_{i=1}^{n-1} u_{i,0} f_{i}
 \right)
- 
\left( \frac{\partial f_{0}}{\partial x} + \sum_{i=1}^{n-1} u_{i,0}\frac{\partial f_{i}}{\partial x} \right)
f_{k} 
 $$

Therefore, an easy direct computation yields: 
\begin{equation}\label{brack-g-fk:eq}
\ad_{g} G = \left(\frac{\dd B}{\dd t} (t)- A(t)B(t)\right)_{\mid t=0}  = - \left(A(t) - \ddt\right)B(t)_{\mid t=0}.
\end{equation}
Thus, by induction, we get $\ad_{g}^k G =  - \left(A(t) - \frac{\dd}{\dd t} \right)\left( (-1)^{k-1} \left(A(t) - \frac{\dd}{\dd t} \right)^{k-1} B(t)\right) _{\mid t=0} = (-1)^k  \left( \left(A(t) - \ddt\right)^kB(t)\right)_{\mid t=0}$, since $u_{0}$ is constant, and the controllability matrix $\Cc(0)$ is proven to be equal to $\Gg_{k^{\star}}(x_0,u_0)$.

Moreover, assuming that, at $(x_{0},u_{0})$,  $\ad_{g} f_{k} \not \in  \spn\{f_1, \cdots, f_{n-1}\} = \im{~G}$ for some $k\in \{1,\ldots, n-1\}$, then the matrix $\left( \begin{array}{cc} G&-\ad_{g} G \end{array}\right)$ has rank $n$, which immediately implies that the controllability matrix $\Gg_{1}(x_0,u_0)=\Cc(0)$ has full rank, hence the first order controllability at $(x_0,u_0)$.
\end{proof}

\subsubsection{Flat Outputs for Affine Systems with $\tens{n}\bm{-1}$ Inputs at Generic Points}
\label{subsec:flat-gen}

\begin{thm}
\label{theo::flat_outputs}
Let $(x_0,u_0) \in X \times \R^{n-1}$ and $k$ be such that
assumption~\eqref{eq::first_order_controllability} holds in an open
neighborhood of $(x_{0},u_{0})$. Then, every $(n-1)$-tuple of first integrals of $f_k$, independent at
$(x_{0},u_{0})$, forms a vector of flat
outputs in an open neighborhood of $(x_{0},u_{0})$.
\end{thm}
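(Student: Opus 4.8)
The plan is to construct explicitly the Lie--B\"acklund isomorphism between system~\eqref{equ::explicit} and the trivial system $\R^{n-1}_{\infty}$, thereby exhibiting any independent $(n-1)$-tuple $y=(y_1,\dots,y_{n-1})$ of first integrals of $f_k$ as a flat output. First I would record the easy consequences of the hypothesis~\eqref{eq::first_order_controllability}: since $\spn\{f_1,\dots,f_{n-1},[g,f_k]\}$ has dimension $n$, the fields $f_1,\dots,f_{n-1}$ are independent near $(x_0,u_0)$, in particular $f_k(x)\neq 0$; by the straightening theorem the map $x\mapsto y(x)$ has rank $n-1$ near $x_0$, its level sets are the integral curves of $f_k$, and $\ker\frac{\partial y}{\partial x}=\spn\{f_k\}$ at every nearby point. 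Writing $L_{f_k}y_i\triangleq\frac{\partial y_i}{\partial x}f_k=0$, along any trajectory $\dot y_i=\frac{\partial y_i}{\partial x}g=\frac{\partial y_i}{\partial x}f_0+\sum_{j\neq k}u_j\frac{\partial y_i}{\partial x}f_j$, so \emph{$u_k$ does not appear in $\dot y$}.

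The core of the argument is to show that the map
$$\Theta:(x,(u_j)_{j\neq k})\longmapsto\bigl(y(x),\dot y\bigr),\qquad \dot y_i\triangleq\tfrac{\partial y_i}{\partial x}f_0(x)+\sum_{j\neq k}u_j\tfrac{\partial y_i}{\partial x}f_j(x),$$
between spaces of equal dimension $2n-2$, is a local diffeomorphism at $(x_0,u_0)$. I would analyse the kernel of its Jacobian: a kernel vector $(\delta x,\delta u)$ satisfies $\frac{\partial y}{\partial x}\delta x=0$, hence $\delta x=\lambda f_k$; using the identity
$$\tfrac{\partial y_i}{\partial x}[g,f_k]=-L_{f_k}\!\bigl(\tfrac{\partial y_i}{\partial x}g\bigr),$$
which follows from $L_{[g,f_k]}h=L_gL_{f_k}h-L_{f_k}L_gh$ together with $L_{f_k}y_i=0$, the remaining equations reduce to $\frac{\partial y}{\partial x}\bigl(-\lambda[g,f_k]+\sum_{j\neq k}\delta u_jf_j\bigr)=0$, i.e. $-\lambda[g,f_k]+\sum_{j\neq k}\delta u_jf_j\in\spn\{f_k\}$. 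Then $\lambda[g,f_k]$ is a linear combination of $f_1,\dots,f_{n-1}$, which by~\eqref{eq::first_order_controllability} forces $\lambda=0$ and, by independence of the $f_j$, all $\delta u_j=0$. So $\Theta$ is invertible near $(x_0,u_0)$, giving $x=\Xi(y,\dot y)$ and $u_j=U_j(y,\dot y)$ for $j\neq k$, smooth in $(y,\dot y)$.

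To finish, I would recover $u_k$ and close the loop. For a smooth curve $t\mapsto y(t)$ near $y(x_0)$, set $x(t)=\Xi(y(t),\dot y(t))$; differentiating and using $\frac{\partial y}{\partial x}\dot x=\dot y=\frac{\partial y}{\partial x}(f_0+\sum_{j\neq k}u_jf_j)$ shows $\dot x-f_0(x)-\sum_{j\neq k}u_jf_j(x)\in\ker\frac{\partial y}{\partial x}=\spn\{f_k(x)\}$; letting $u_k$ be the corresponding coefficient gives $\dot x=g(x,u)$, and since $\dot x=\frac{d}{dt}\Xi(y,\dot y)$ is smooth in $(y,\dot y,\ddot y)$ and $f_k(x)\neq0$, we get $u_k=U_k(y,\dot y,\ddot y)$ smooth. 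Repeatedly differentiating the $U_j$ yields $\ol u=\Phi_1(\ol y)$, hence $(x,\ol u)=\Phi(\ol y)$ with $\frac{d}{dt}\Phi_0=g(\Phi_0,\Phi_1)$ identically; conversely $\ol y=\Psi(x,\ol u)$ with $y=y(x)$ and the higher $y^{(j)}$ obtained by iterating $C_g$. Since $\Phi$ and $\Psi$ are mutually inverse local Lie--B\"acklund isomorphisms, system~\eqref{equ::explicit} is flat at $(x_0,u_0)$ with flat output $y$; this also makes visible the Brunovsk\`y structure announced in the introduction, $y_k$ having relative degree $2$ and the other $y_j$ relative degree $1$.

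The hard part is the kernel computation showing $\Theta$ is a local diffeomorphism --- in particular, establishing the identity $\frac{\partial y_i}{\partial x}[g,f_k]=-L_{f_k}(\frac{\partial y_i}{\partial x}g)$ with the correct sign and recognising that it is exactly hypothesis~\eqref{eq::first_order_controllability} that makes the Jacobian invertible. Everything afterwards (recovering $u_k$, and checking that $\Phi$ and $\Psi$ are inverse Lie--B\"acklund isomorphisms with the dynamics identically satisfied) is routine.
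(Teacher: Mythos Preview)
Your proof is correct and follows essentially the same route as the paper: the kernel argument showing that $(x,\hat u)\mapsto(y,\dot y)$ is a local diffeomorphism via the identity $L_{f_k}L_g y_i=-L_{[g,f_k]}y_i$ is exactly the paper's computation, and your shortcut for recovering $u_k$ by differentiating $x=\Xi(y,\dot y)$ is precisely the alternative the paper itself mentions in a remark following the proof. Your closing comment on the Brunovsk\'y structure is slightly off --- there is no distinguished flat-output component ``$y_k$'' (the index $k$ refers to the control vector field $f_k$, not to a component of $y$), and in the paper's extended-state linearization all $n-1$ components satisfy $\ddot z_i=w_i$ --- but this does not affect the argument.
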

\begin{proof}
We consider $f_k$ satisfying assumption~\eqref{eq::first_order_controllability}. Note that  $f_k(x_0) \neq 0$, since otherwise, the rank of the distribution $\spn\{f_{1}, \ldots, f_{n-1},[g,f_{k}]\}$ would be smaller than or equal to $n-1$ at this point.
Thus, $x_0$ is a transient point of $f_k$, and, according to \eg \cite{Arnold,Lee2013},  there exist $(n-1)$ differentially independent first integrals\footnote{The existence of $n-1$ independent local first integrals is equivalent to the existence of local coordinates in which $f_{k}$ is straightened out. Note that the practical computation of these coordinates is generally done by computing the associated first integrals (see again \cite{Arnold,Lee2013}). Therefore, straightening out $f_{k}$ would not provide significant simplifications in general.} of $f_k$, noted $z_{1}= \psi_{0,1}(x), \ldots, z_{n-1} = \psi_{0,n-1}(x)$, \ie satisfying:
\begin{equation}
\label{eq::first_integral}
L_{f_k}\psi_{0,i}(x) = 0, \quad i=1,\cdots,n-1,
\end{equation}
where we have denoted by $L_{\gamma}h$ the Lie derivative of an arbitrary differentiable function $h$ along the vector field $\gamma$.   

In order to show that, from these first integrals, one can deduce the diffeomorphism that puts the system in  canonical form \eqref{flat-CLdynfeed-equiv:eq}, whose Jacobian matrix is explicitly given by the matrix $N$ below (see \eqref{N'def:eq}), that we prove to be locally invertible,  we first show that the Lie derivative of the vector function $\psi_0 \triangleq (\psi_{0,1}, \cdots, \psi_{0,n-1})$ of the first integrals along the system vector field $g$ does not depend on the input $u_{k}$.
Thanks to~\eqref{eq::first_integral}, we have: 
\begin{equation}\label{dotz:eq}
\dot{z}_{i} = \sum_{j=1}^{n} \frac{\partial \psi_{0,i}}{\partial x_{j}}(x) \dot{x}_{j} 
= L_{f_0}\psi_{0,i}(x) + \sum_{j=1}^{n-1} u_jL_{f_j}\psi_{0,i}(x)
= L_{f_0}\psi_{0,i}(x) + \sum_{j \neq k} u_jL_{f_j}\psi_{0,i}(x) \triangleq \psi_{1,i}(x,u).
\end{equation}
Thus, $\frac{\partial \dot{z}_i}{\partial u_j} = \frac{\partial \psi_{1,i}}{\partial u_j}  = L_{f_j} \psi_{0,i}$ for $j \neq k$ and $\frac{\partial \psi_{1,i}}{\partial u_k} = 0$ for all $i \in \{1, \cdots, n-1\}$, thus meaning that $\psi_{1,i}$ depends only on $(x,\hat{u})$ where $\hat{u}$ denotes the collection of inputs where $u_k$ has been removed, \ie  $\hat{u} \triangleq \left( u_{1},\ldots, u_{k-1},u_{k+1},\ldots,u_{n-1} \right)$. 

Denoting by $z \triangleq (z_1, \cdots, z_{n-1}) = \psi_{0}(x)$ and $\dot{z} \triangleq \left( \psi_{1,1}(x,\hat{u}), \ldots, \psi_{1,n-1}(x,\hat{u})\right) \triangleq \psi_1(x,\hat{u})$, $D\psi_{0}(x) = \frac{\partial \psi_{0}}{\partial x}(x)$ the  Jacobian matrix of $\psi_0$ at the point $x$, of rank $n-1$ according to the independence of the first integrals, and $\hat{f} \triangleq \left( f_{1}, \ldots, f_{k-1},f_{k+1}, \ldots, f_{n-1}\right)$, 
we have that $\frac{\partial \dot{z}}{\partial \hat{u}} =\frac{\partial \psi_{1}}{\partial \hat{u}} = \left(D\psi_{0}\right) \hat{f}$ does not depend on $u$ and $\rk \left ( \frac{\partial \psi_{1}}{\partial \hat{u}}\right )  = \rk \left( \left(D\psi_{0}\right) \hat{f}\right)= n-2$ since, by \eqref{eq::first_order_controllability},  the $n-2$ columns of $\hat{f}$ are independent.

We next consider the following matrix:
$$
M \triangleq  \left( \begin{array}{cc}
\ds \frac{\partial \psi_{0}}{\partial x}&\ds 0\vspace{0.2em}\\
\ds \frac{\partial \psi_{1}}{\partial x}&\ds \frac{\partial \psi_{1}}{\partial \hat{u}}
\end{array}\right)
$$
Clearly, $M$ is the Jacobian matrix of the mapping $\psi \triangleq (\psi_{0},\psi_{1}): (x,\hat{u}) \mapsto (\psi_{0}(x) = z, \psi_{1}(x,\hat{u}) = \dot{z})$. We prove that its rank is exactly $2n-2$ in an open neighborhood of $(x_{0},u_{0})$.

Assume, on the contrary, that the columns of $M$ are linearly dependent in a given neighborhood of $(x_{0},u_{0})$. Thus, there exist smooth functions $\lambda_{1}, \ldots, \lambda_{n}$ and $\mu_{1}, \ldots, \mu_{n-1}$, not all equal to 0, such that
\begin{align}
&\sum_{j=1}^{n} \lambda_{j}(x,\hat{u})\frac{\partial \psi_{0}}{\partial x_j} (x)= 0. \label{upperleftblock:eq}\\
&\sum_{j=1}^{n} \lambda_{j}(x,\hat{u})\frac{\partial \psi_{1}}{\partial x_j} (x,\hat{u})+ \sum_{j=1, j \neq k}^{n-1} \mu_{j}(x,\hat{u}) \frac{\partial \psi_{1}}{\partial u_j} (x,\hat{u}) = 0 \label{lowerblock:eq}
\end{align}
In view of equation \eqref{upperleftblock:eq}, since $(\psi_{0,1}, \ldots, \psi_{0,n-1})$ are independent first integrals of $f_k$, we immediately deduce that there exists a smooth function $\lambda$ such that 
$$\left ( \begin{array}{c} \lambda_{1}\\\vdots\\\lambda_{n} \end{array} \right) = \lambda f_k$$
and thus, using \eqref{dotz:eq}, equation \eqref{lowerblock:eq} reads
\begin{equation}\label{dep:eq}
\lambda (x,\hat{u}) \frac{\partial \psi_{1}}{\partial x}(x,\hat{u}) f_{k}(x) + \sum_{j=1, j \neq k}^{n-1} \mu_{j}(x,\hat{u}) \frac{\partial \psi_{1}}{\partial u_j} (x) = 
\lambda L_{f_{k}}\psi_{1}(x,\hat{u}) + \sum_{j=1, j\neq k}^{n-1} \mu_{j}L_{f_{j}}\psi_{0}(x,\hat{u})
= 0
\end{equation}
but, again using \eqref{dotz:eq} and \eqref{eq::first_integral}, we get  $L_{f_{k}}\psi_1 =  L_{f_{k}}L_{g}\psi_0 = - L_{[g,f_{k}]}\psi_0$ and \eqref{dep:eq} becomes
$$
 - \lambda L_{[g,f_{k}]}\psi_0  + \sum_{j=1, j\neq k}^{n-1} \mu_{j} L_{f_{j}}\psi_0 = 
L_{\left(- \lambda [g,f_{k}] + \sum_{j=1, j\neq k}^{n-1} \mu_{j}f_{j} \right)} \psi_0
= 0
$$
which proves that the linear combination $- \lambda [g,f_{k}] + \sum_{j=1, j\neq k}^{n-1} \mu_{j}f_{j}$ of  $ f_{1}, \ldots, f_{k-1},f_{k+1}, \ldots, f_{n-1}$ and $[g,f_{k}]$ must be colinear to $f_k$, hence contradicting the assumption \eqref{eq::first_order_controllability}. We have thus proven that $\rk{M}= 2n-2$ in a neighborhood of $(x_0,u_0)$, as announced and thus that the mapping $\psi \triangleq (\psi_{0},\psi_{1}): (x,\hat{u}) \mapsto (\psi_{0}(x) = z, \psi_{1}(x,\hat{u}) = \dot{z})$ is locally invertible.

Therefore $x$ and $\hat{u}$ can be obtained from $z$ and $\dot{z}$ in a unique way
$$x=\varphi_{0}(z,\dot{z}), \quad \hat{u}= \varphi_{1}(z,\dot{z})$$
where $(\varphi_{0},\varphi_{1})$ is the inverse mapping of $\psi =(\psi_{0},\psi_{1})$.

In order to obtain the last input $u_k$ from the derivatives of $z$, and thus prove that $z$ is a flat output, we compute $\ddot{z}= L_{g}\psi_{1}$:

$$\begin{aligned}
\ddot{z}= L_g \psi_{1}
= &~L^{2}_{f_{0}}\psi_{0} +\sum_{i\neq k} \hat{u}_{i}L_{f_{0}}L_{f_{i}}\psi_{0} + \sum_{i\neq k} \hat{u}_{i}L_{f_{i}}L_{f_{0}}\psi_{0} + u_{k}L_{f_{k}}L_{f_{0}}\psi_{0} \\
&+ \sum_{i\neq k}u_{k}\hat{u}_{i}L_{f_{k}}L_{f_{i}}\psi_{0}
+ \sum_{i,j\neq k} \hat{u}_{i}\hat{u}_{j}L_{f_{i}}L_{f_{j}}\psi_{0} + \sum_{i\neq k} \dot{\hat{u}}_{i}L_{f_{i}}\psi_{0} \triangleq \psi_{2}(x,\hat{u},\dot{\hat{u}},u_k)
\end{aligned}$$

This immediately yields
\begin{equation}\label{zddotdothatublock:eq}
\frac{\partial \psi_{2}}{\partial \dot{\hat{u}}_{i}}= L_{f_{i}}\psi_{0} = \frac{\partial \psi_{1}}{\partial \hat{u}_{i}}
\end{equation}
and 
\begin{equation}\label{zddotukblock:eq}
\frac{\partial  \psi_{2}}{\partial u_{k}}=L_{f_{k}} L_{f_{0}}\psi_{0} + \sum_{i\neq k}\hat{u}_{i}L_{f_{k}}L_{f_{i}}\psi_{0} = L_{f_{k}}L_{g}\psi_{0}= - L_{[g,f_{k}]}\psi_{0} \neq 0
\end{equation}
as an immediate consequence of \eqref{eq::first_order_controllability}.

Therefore, we may complete the matrix $M$ by the following \mbox{$3(n-1)\times 3(n-1)$} square matrix
\begin{equation}\label{N'def:eq}
N \triangleq \left( \begin{array}{cccc}
\ds \frac{\partial \psi_0}{\partial x}&\ds 0&0&0 \vspace{0.3em}\\
\ds \frac{\partial \psi_1}{\partial x}&\ds \frac{\partial \psi_1}{\partial \hat{u}}&0&0\vspace{0.3em}\\
\ds \frac{\partial \psi_2}{\partial x}&\ds \frac{\partial \psi_2}{\partial \hat{u}}&\ds \frac{\partial \psi_2}{\partial u_{k}}&\ds \frac{\partial \psi_2}{\partial \dot{\hat{u}}}
\end{array}\right)
\end{equation}
which, according to \eqref{zddotdothatublock:eq} and \eqref{zddotukblock:eq}, is clearly invertible.
We indeed recognize that $N$ is the Jacobian matrix of the transformation $\ol{\psi} \triangleq (\psi_{0},\psi_{1},\psi_{2})$, which proves that $\ol{\psi}$ is a local diffeomorphism and thus that $x$ and $u= (\hat{u},u_{k})$ may be expressed as functions of $(z,\dot{z},\ddot{z})$.

Putting these results together, we have proven that $z$ is a vector of flat outputs
\end{proof}

\begin{rem}
A comparable result has been proven by P. Martin
in~\cite{Martin93} using different ideas, related to system structure
at infinity. More precisely, in~\cite{Martin93},
condition~\eqref{eq::first_order_controllability} is proven to be a
flatness sufficient condition, but the role played by first integrals
of one of the control vector fields in the construction of flat
outputs has not been brought to light.   
\end{rem}
\ck

\begin{rem}
In the case of systems with $n-1$ inputs, thanks to theorem~\ref{theo::flat_outputs}, the general approach to the computation of flat outputs presented in \cite{Levine09,Levine11}, based on an implicit representation of system \eqref{equ::explicit}, is not needed. Moreover, the present direct approach does not require an unbounded recursion as in the above mentioned references. These facts will yield important simplifications in the analysis of flatness singularities in the next sections.
\end{rem}

\begin{rem}
The last part of the proof of theorem~\ref{theo::flat_outputs} could be slightly shortened by remarking that, since $x$ has been proven to be a function of $(z,\dot{z})$, the last input $u_k$ may be obtained by differentiating $x$ with respect to time, but we have preferred a more explicit argument by constructing the Jacobian matrix of the diffeomorphism expressing $x,\hat{u},u_{k}$ in function of $(z,\dot{z},\ddot{z})$.
\end{rem}

\subsubsection{Interpretation in terms of Feedback Linearization}
\label{subsec:feedback}

Consider the $(2n-2)$-dimensional extended system (see also
~\cite{Martin93}):  
\begin{equation}\label{extsys:eq}
\begin{aligned}
&\dot{x} =  \left( f_{0}(x) + \sum_{i\neq k} u_{i} f_{i}(x) \right) + u_{k} f_{k}(x)\\
&\dot{u}_{i} = v_{i}, \quad  i \neq k
\end{aligned}
\end{equation}

with drift\footnote{hence we have $g=f+u_{k}f_{k}$.}
\begin{equation}\label{f:drift:eq}
f \triangleq f_{0} + \sum_{i \neq k} u_{i} f_{i}
\end{equation} 
and control vector fields defined by: 
$$
\tilde{f}_{i} \triangleq \left \{ \begin{array}{cl}
\ds \frac{\partial }{\partial u_{i}} , & i \in \{1,\ldots,n-1\},
~ i \neq k \\ 
\ds f_{k}, & i = k.
\end{array} \right.
$$

Then we consider the distributions 
$$G_{0} = \spn\{\tilde{f}_{1}, \ldots, \tilde{f}_{n-1}\}, \quad G_{1}
= G_{0} + \ad_{f} G_{0}$$  
in a neighborhood of a generic point, where we have denoted $\ad_{f}
G_{0} \triangleq \left\{ \ad_{f} \gamma \mid \gamma\in G_{0}\right\}$. 
A direct computation immediately shows that $[\tilde{f}_{i},\tilde{f}_{j}] = 0$ for all $i,j$, that $\ad_{f}\tilde{f}_{i} = -f_{i}$ if $i\neq k$, and 
$\ad_{f}\tilde{f}_{k} = \ad_{f}f_{k}= [f,f_{k}] = [g,f_{k}]$ (where $g$ is defined by~\eqref{gdef:eq}).  
Thus, according to
assumption \eqref{eq::first_order_controllability}, $G_{0}$ is clearly involutive and 
$$G_{1} = \spn\left\{\frac{\partial }{\partial u_{1}}, \ldots,
  \frac{\partial }{\partial u_{k-1}}, \frac{\partial }{\partial
    u_{k+1}}, \ldots, \frac{\partial }{\partial u_{n-1}},f_{1},
  \ldots, f_{n-1},[g,f_k]\right\}$$  
is involutive and has rank $2n-2$ in the neighborhood under consideration. Hence, according
to~\cite{Huntetal83,JakubczykRespondek80}, system \eqref{extsys:eq} is 
static feedback linearizable in a suitable
neighborhood of the extended state manifold of dimension $(2n-2)$ and local coordinates
$(x, \hat{u}) = \left( x_{1},\ldots, x_{n}, u_1,\ldots,
u_{k-1},u_{k+1},\ldots,u_{n-1} \right)$, where $\hat{u}
\triangleq (u_1,\ldots, u_{k-1},u_{k+1},\ldots,u_{n-1})$ is the state extension of dimension $n-2$. In other words, there
exists a regular extended-state feedback $v_i = a_i(x,\hat{u},w)$, for
$i=1, \cdots, n-1$, $i \neq k$, and $u_{k}= a_{k}(x,\hat{u},w)$, where
$w\triangleq \left( w_{1}, \ldots, w_{n-1}\right)$ is the new input,
and a local
diffeomorphism $(z, \dot{z}) = \psi(x,\hat{u})$, such that the
closed-loop system of dimension $2n-2$, namely  
$$\begin{array}{ccl}
\dot{x} & = & \ds f_{0}(x) + \sum_{i\neq k} \hat{u}_{i} f_{i}(x) +
a_{k}(x,\hat{u},w) f_{k}(x)\\ 
\dot{\hat{u}}_{i} & = & \ds a_{i}(x,\hat{u},w), \quad i \neq k
\end{array}$$
where we have renamed $u_{i} \triangleq \hat{u}_{i}$, $i\neq k$,
can be transformed by $\psi$ into the $(2n-2)$-dimensional linear controllable one
$$\ddot{z}_{i} = w_{i}, \quad i =1,\ldots,n-1.$$

\subsection{A First Atlas Construction}

Let $\Omega_{0} \subset X \times \R^{n-1}$ be the set of points
$(x,u)$ that satisfy
assumption~\eqref{eq::first_order_controllability}\footnote{Note that
  condition~\eqref{eq::first_order_controllability} depends on $u$ by
  $f=f_{0} + \sum_{i \neq k} u_{i} f_{i}$. Note furthermore that, according to \eqref{gdef:eq} and \eqref{f:drift:eq}, $[g,f_{k}]=[f,f_{k}]$.},
\ie 
\begin{equation}\label{omega:def}
\begin{aligned}
\Omega_{0} \triangleq \{ (x,u)\in &X\times \R^{n-1} \mid \exists k :
\spn\{f_{1}, \ldots, f_{n-1}, [f,f_{k}]\} = \TT_{x}X  \} 
\end{aligned}
\end{equation}
where $\TT_{x}X$ denotes the tangent space of $X$ at the point $x$ and $f=f_{0} + \sum_{i \neq k} u_{i} f_{i}$ (see \eqref{f:drift:eq}). We denote by $\tilde\Omega_{0} \subset \X$, the
set of points $(x,\overline{u})$ whose projection
$(x,u)$ in $X \times \R^{n-1}$ belongs to $\Omega_{0}$

We also consider the set
\begin{equation}
\Omega \triangleq \{ (x,u)\in X\times\R^{n-1} \mid \exists k^{\star}\in \N:  \rk\Dd_{k^{\star}}(x,u) = n  \} 
\end{equation}
with $\Dd_{k}$ defined by \eqref{Dk-seq:def}. Recall indeed that $\Omega_{0}
\subset \Omega$ (see lemma~\ref{lemma::controllability_at_first_oder} and proposition~\ref{prop:strong_contr_rk}). 
We also denote by $\tilde\Omega \subset \X$, the
set of points $(x,\overline{u})$ whose projection
$(x,u)$ in $X \times \R^{n-1}$ belongs to $\Omega$. We indeed also have $\tilde\Omega_0 \subset \tilde\Omega$.

Then the following assertion holds: 
\begin{thm}
\label{theo::atlas}
Under assumption~\eqref{eq::first_order_controllability}, for each point $(x_{0},\overline{u}_{0}) \in \tilde\Omega_0$, there exists an open
neighborhood $U_{(x_{0},u_{0})} \subset \tilde\Omega_0$ of
$(x_{0},\overline{u}_{0})$\footnote{The neighborhood $U_{(x_{0},u_{0})}$ can always be chosen of the form $U^{0}_{(x_{0},u_{0})} \times \R^{n-1}_{\infty}$ where $U^{0}_{(x_{0},u_{0})}$ is a neighborhood  of $(x_{0},u_{0})$ in $\Omega_0$ and thus only depends on $(x_{0},u_{0})$.}, and
a well-defined flat output 
$z = \Phi_{(x_{0},u_{0})}(x,u,\dot{u})$ in $U_{(x_{0},u_{0})}$, constructed according to
theorem~\ref{theo::flat_outputs}. Moreover, $(U_{(x_{0},u_{0})},\Phi_{(x_{0},u_{0})})$
constitutes a Lie-B\"acklund chart, the collection of which
defines a Lie-B\"acklund atlas.  

Moreover, the set of intrinsic singularities is contained in
$\tilde\Omega_{0}^{\CC}$ and contains $\tilde\Omega^{\CC}$, where the superscript $^{\CC}$ stands for the complementary of the corresponding set.  
\end{thm}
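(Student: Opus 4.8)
The plan is to obtain the first three assertions directly from theorem~\ref{theo::flat_outputs}, using that condition~\eqref{eq::first_order_controllability} is open and that, as recorded right after definition~\ref{L-B-charts:def}, two overlapping Lie-B\"acklund charts are automatically compatible; and then to deduce the two inclusions on the intrinsic singularity set (write $\mathrm{Sing}$ for that set): the inclusion $\mathrm{Sing}\subset\tilde\Omega_0^{\CC}$ from this chart construction, and the inclusion $\tilde\Omega^{\CC}\subset\mathrm{Sing}$ from the implication ``the $\Gamma$-accessibility rank condition fails $\Rightarrow$ the system is not flat'' supplied by proposition~\ref{prop:strong_contr_rk} and theorem~\ref{access_rk_cond:thm}.

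\emph{Construction of the charts.} Fix $(x_{0},\ol{u}_{0})\in\tilde\Omega_0$ and an index $k$ for which \eqref{eq::first_order_controllability} holds at $(x_{0},u_{0})$. Since \eqref{eq::first_order_controllability} amounts to the non-vanishing of an $n\times n$ determinant built from $f_{1},\ldots,f_{n-1},[f,f_{k}]$, it holds, with the same $k$, on an open neighborhood $U^{0}_{(x_{0},u_{0})}\subset\Omega_{0}$ of $(x_{0},u_{0})$; shrinking it, one may also arrange that a chosen $(n-1)$-tuple $\psi_{0}=(\psi_{0,1},\ldots,\psi_{0,n-1})$ of first integrals of $f_{k}$ be independent throughout $U^{0}_{(x_{0},u_{0})}$. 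Put $U_{(x_{0},u_{0})}\triangleq U^{0}_{(x_{0},u_{0})}\times\R^{n-1}_{\infty}\subset\tilde\Omega_0$. By theorem~\ref{theo::flat_outputs}, $\psi_{0}$ is a vector of flat outputs on $U_{(x_{0},u_{0})}$; by the definition of flatness this exactly says that its prolongation $\Phi_{(x_{0},u_{0})}:(x,\ol{u})\mapsto(\psi_{0}(x),L_{g}\psi_{0},L_{g}^{2}\psi_{0},\ldots)$ is a $C^{\infty}$ isomorphism of $U_{(x_{0},u_{0})}$ onto an open subset of $\R^{n-1}_{\infty}$, with $C^{\infty}$ inverse, sending $C_{g}$ to the trivial Cartan field $\tau$ (its base being the finite-order diffeomorphism $\ol{\psi}=(\psi_{0},\psi_{1},\psi_{2})$ built in the proof of theorem~\ref{theo::flat_outputs}). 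Hence $(U_{(x_{0},u_{0})},\Phi_{(x_{0},u_{0})})$ is a Lie-B\"acklund chart in the sense of definition~\ref{L-B-charts:def}(i); as noted in the footnote to the statement, $U_{(x_{0},u_{0})}$ can be taken of the above product form, so it depends only on $(x_{0},u_{0})$.

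\emph{The atlas, and the inclusion $\mathrm{Sing}\subset\tilde\Omega_0^{\CC}$.} By the remark following definition~\ref{L-B-charts:def}, any two of the charts $(U_{(x_{0},u_{0})},\Phi_{(x_{0},u_{0})})$ are compatible wherever they overlap, by transitivity of the Lie-B\"acklund equivalence relation; hence the collection $\mathfrak{A}$ of all of them, indexed by $\tilde\Omega_0$, is a Lie-B\"acklund atlas, with $\mathfrak{U}_{\mathfrak{A}}=\bigcup U_{(x_{0},u_{0})}\supseteq\tilde\Omega_0$. Consequently no point of $\tilde\Omega_0$ is excluded from all charts of every atlas, so by definition~\ref{def::intrinsic-singularities} the set of intrinsic singularities is contained in $\tilde\Omega_0^{\CC}$.

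\emph{The inclusion $\tilde\Omega^{\CC}\subset\mathrm{Sing}$, and the main difficulty.} Let $(x_{0},\ol{u}_{0})\notin\tilde\Omega$, i.e.\ $(x_{0},u_{0})\notin\Omega$: then $\rk\Dd_{k^{\star}}(x_{0},u_{0})<n$ for every $k^{\star}\in\N$, that is $\dim\Dd_{\infty}(x_{0})<n$, so the strong accessibility rank condition fails at $x_{0}$. By proposition~\ref{prop:strong_contr_rk}, $\dim\Gamma_{\infty}(x_{0},u_{0})=n$ would imply $\dim\Dd_{\infty}(x_{0})=n$; hence $\dim\Gamma_{\infty}(x_{0},u_{0})<n$, i.e.\ the $\Gamma$-accessibility rank condition fails at $(x_{0},u_{0})$. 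Theorem~\ref{access_rk_cond:thm} states that any system flat at a point satisfies the $\Gamma$-accessibility rank condition there; contrapositively, system~\eqref{equ::explicit} is not flat at $(x_{0},\ol{u}_{0})$, and by definition~\ref{def::intrinsic-singularities} (and the comment following it) $(x_{0},\ol{u}_{0})$ is an intrinsic singularity. Thus $\tilde\Omega^{\CC}$ is contained in the set of intrinsic singularities. Once theorems~\ref{theo::flat_outputs} and~\ref{access_rk_cond:thm} are in hand the argument is essentially bookkeeping; the only point requiring genuine care is this last implication chain (and the automatic compatibility of the charts), and the gap between the two inclusions simply reflects that the necessary condition of theorem~\ref{access_rk_cond:thm} and the sufficient condition~\eqref{eq::first_order_controllability} do not coincide.
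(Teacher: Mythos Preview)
Your proof is correct and follows essentially the same route as the paper's: construct the charts from theorem~\ref{theo::flat_outputs}, invoke transitivity of Lie-B\"acklund equivalence for compatibility, and obtain the two inclusions from the sufficient condition~\eqref{eq::first_order_controllability} and the necessary condition of theorem~\ref{access_rk_cond:thm}. If anything, your treatment of the inclusion $\tilde\Omega^{\CC}\subset\mathrm{Sing}$ is slightly more careful than the paper's, since you make explicit the use of proposition~\ref{prop:strong_contr_rk} to pass from failure of the strong accessibility rank condition (which is how $\Omega$ is defined, via $\Dd_{k}$) to failure of the $\Gamma$-accessibility rank condition (which is what theorem~\ref{access_rk_cond:thm} actually requires).
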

\begin{proof}
For each point $(x_{0},\overline{u}_{0}) \in \tilde{\Omega}_0$, there exists some $k$ and an open  neighborhood  $U_{(x_{0},u_{0})}$ of $(x_{0},\overline{u}_{0})$ such that condition~\eqref{eq::first_order_controllability} holds and such that there exists a flat output $z=\Phi_{(x_{0},u_{0})}(x,u,\dot{u})$ made of $n-1$ independent first integrals of $f_k$ in $U_{(x_{0},u_{0})}$ according to theorem~\ref{theo::flat_outputs}. The
charts, therefore made of the pairs $(U_{(x_{0},u_{0})},
\Phi_{(x_{0},u_{0})})$ are indeed 
Lie-B\"acklund charts and naturally compatible (property (ii) of
definition~\ref{L-B-charts:def}) thanks to the transitivity of the
Lie-B\"acklund equivalence relation. Therefore they form a
Lie-B\"acklund atlas.

Since $\tilde\Omega_{0}$ contains only regular points, and since, in
$\tilde\Omega$, the $\Gamma$-accessibility rank condition $\rk\Gg_{k^{\star}}(x,u) = n$ is satisfied
(see lemma~\ref{lemma::controllability_at_first_oder}) and is a necessary
flatness condition (theorem~\ref{access_rk_cond:thm}), the last assertion
of this theorem is proven. 
\end{proof}

\begin{rem}\label{0-Lebesgue:rem} 
When the fields are analytic, the complementary of $\Omega_{0}$, made of the
points where   $\dim(\spn\{f_{1}, \ldots, f_{n-1}, [f,f_{k}]\}) < n$ for all $k \in \{0,\ldots,n-1\}$ is at least of
codimension 1, hence the set of intrinsic singularities has zero
Lebesgue measure.  When the fields are $C^\infty$ but not analytic, the Lebesgue measure of
$\Omega_{0}^{\CC}$ may be non-zero.  
\end{rem}

\subsection{More on the Set of Intrinsic Singularities, Non Generic Points}
\label{sec:more}

In the previous section, we have proven that the set of intrinsic
singularities is contained in $\tilde\Omega_{0}^{\CC}$ and contains
$\tilde\Omega^{\CC}$. In this section, we investigate more in depth the
structure of $\tilde\Omega_{0}^{\CC}$ and show that there might exist points
of $\tilde\Omega_{0}^{\CC}$ where the system is still flat, or otherwise
stated, $\tilde\Omega_{0}^{\CC}$ might contain some apparent singularities.  

The next result studies degenerated situations, compared to \eqref{eq::first_order_controllability}, namely when some of the control vector fields become linearly dependent of the others. Therefore, it is intended to be applied to points $(x,u)\not\in \Omega_{0}$.

We thus consider a point $x_{0}$ and the distribution $\Gamma_{0} \triangleq
\spn\{f_1, \cdots, f_{n-1}\}$ and denote by $\Gamma_{0}(x_{0})$ the vector space generated by the vectors $\{f_1(x_{0}), \cdots, f_{n-1}(x_{0})\}$. We assume that $\dim \Gamma_{0}(x_{0})=n-p$, with $p > 1$\footnote{Recall that $\Gamma_{0}$ is assumed to have dimension $n-1$ in an open dense subset $\mathcal{O} \subset X$. Thus, we indeed have $x_{0}\not\in \mathcal{O}$}.
Without loss of generality and up to a renumbering of the $f_{i}$'s, $i=1, \ldots, n-1$, we note 
\begin{equation}\label{Gamma-a-b:def}
\Gamma_{0}^{a} \triangleq \spn\{f_{1}, \ldots, f_{n-p}\}, \quad 
\Gamma_{0}^{b} \triangleq \spn\{f_{n-p+1}, \ldots, f_{n-1}\},
\end{equation}
and assume that $\Gamma_{0}(x_{0}) = \Gamma_{0}^{a}(x_{0})$, thus meaning that the dimension of $\Gamma_{0}$ drops down from $n-1$ to $n-p$ at $x_{0}$ and that $\Gamma_{0}^{b}(x_{0}) \subset \Gamma_{0}^{a}(x_{0})$.

For simplicity's sake, we note 
\begin{equation} 
\begin{array}{lll}
\label{ua-ub-fa-fb:def}
\ds u_{a} \triangleq \left( u_{1}, \ldots, u_{n-p} \right), &\ds f_{a} \triangleq \left( f_{1},\ldots, f_{n-p} \right), &\ds u_{a}f_{a} \triangleq \sum_{i=1}^{n-p} u_{i}f_{i}, \\
\ds u_{b} \triangleq \left( u_{n-p+1}, \ldots, u_{n-1} \right), &\ds f_{b} \triangleq \left( f_{n-p+1}, \ldots, f_{n-1} \right), &\ds u_{b}f_{b} \triangleq \sum_{i=n-p+1}^{n-1}u_{i}f_{i}.
\end{array}
\end{equation}
Thus, the system equation \eqref{equ::explicit} reads 
$$\dot{x}=f_{0}(x) + u_{a}f_{a}(x) + u_{b}f_{b}(x)
$$ 
and $u_{b}f_{b}$ may now be considered as part of the drift, the independent controls being restricted to $u_a$. Therefore, we embed the drift vector field $f_{0}+u_{b}f_{b}$ in a vector field of $X\times \R^{p-1}_{\infty}$, given by
\begin{equation}\label{f0bext:def}
F_{0}^{b}(x,\ol{u}_{b}) \triangleq f_{0}(x) + u_{b}f_{b}(x)  + \tau_{b}, 
\end{equation}
where $\tau_{b}$ is the trivial Cartan field of $\R^{p-1}_{\infty}$ with global coordinates $\ol{u}_{b}\triangleq \left( u_{b},\dot{u}_{b},\ldots\right)$:
\begin{equation}\label{taub:def}
\tau_{b} \triangleq \sum_{k\geq 0}\sum_{j=n-p+1}^{n-1} u_{j}^{(k+1)}\frac{\partial}{\partial u_{j}^{(k)}} \triangleq \sum_{k\geq 0} u_{b}^{(k+1)}\frac{\partial}{\partial u_{b}^{(k)}}.
\end{equation}

We also introduce the following sequence of distributions of the tangent bundle $\TT X\times \TT \R^{n-1}_{\infty}$:
\begin{equation}\label{Gamma-a-k:def}
\Gamma_{k+1}^{a} \triangleq \Gamma_{k}^{a} + \ad_{F_{0}^{b}}\Gamma_{k}^{a}, \quad \forall k\geq 0.
\end{equation}

The following lemma shows that, in fact, the $\Gamma_{k}^{a}$'s are all contained in $\TT X$ and thus have dimension less than or equal to $n$.

\begin{lem}
We have $\Gamma_{k}^{a}(x,\ol{u}_{b}) \subset \TT_{x} X$ for every $k$, every $x\in X$ and every $\ol{u}_{b}\in \R^{p-1}_{\infty}$, and there exists $k^{\star} \in \N$ such that $\dim \Gamma_{k}^{a}(x,\ol{u}_{b}) \leq  \dim \Gamma_{k^{\star}}^{a}(x,\ol{u}_{b}) \leq n$ for every $k\in \N$ and $(x,\ol{u}_{b})\in X\times \R^{p-1}_{\infty}$.
Moreover, if $(x,\ol{u}_{b})$ is such that the $\Gamma_{k}^{a}(x,\ol{u}_{b})$'s have locally constant dimensions, then $k^{\star} \leq p$.
\end{lem}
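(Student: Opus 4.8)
The plan is to prove the three assertions in turn; the whole lemma hinges on the first, which is the one genuine computation. \emph{Step~1.} I would show by induction on $k$ that every element of $\Gamma_{k}^{a}(x,\ol{u}_{b})$ is a linear combination of the $\frac{\partial}{\partial x_{i}}$ only, i.e. has vanishing $\frac{\partial}{\partial u_{b}^{(j)}}$-components. The base case $k=0$ is immediate, as $f_{1},\ldots,f_{n-p}$ are vector fields on $X$. For the inductive step it suffices to check that $\ad_{F_{0}^{b}}$ maps any vector field $\eta=\sum_{i}\eta_{i}(x,\ol{u}_{b})\frac{\partial}{\partial x_{i}}$ (coefficients allowed to depend on all of $\ol{u}_{b}$) again to a field of this form. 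Splitting $F_{0}^{b}=(f_{0}+u_{b}f_{b})+\tau_{b}$ (recall \eqref{f0bext:def}), the term $[f_{0}+u_{b}f_{b},\eta]$ involves only the $\frac{\partial}{\partial x_{i}}$ because the Lie bracket of two such fields does; and $[\tau_{b},\eta]$ involves only the $\frac{\partial}{\partial x_{i}}$ because $\tau_{b}$ differentiates only the coordinates $u_{b}^{(j)}$, while the coefficients $u_{b}^{(j+1)}$ of $\tau_{b}$ do not depend on $x$, so $\eta\,(u_{b}^{(j+1)})=0$ and no new $\frac{\partial}{\partial u_{b}^{(j)}}$-component is produced. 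Hence $[F_{0}^{b},\eta]\in\TT X$, closing the induction, and in particular $\dim\Gamma_{k}^{a}(x,\ol{u}_{b})\le\dim\TT_{x}X=n$ for every $k$ and every $(x,\ol{u}_{b})$.

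\emph{Steps~2 and 3.} Fix $(x,\ol{u}_{b})$. By \eqref{Gamma-a-k:def} the distributions are nested, $\Gamma_{k}^{a}\subseteq\Gamma_{k+1}^{a}$, so $\dim\Gamma_{k}^{a}(x,\ol{u}_{b})$ is non-decreasing in $k$ and, by Step~1, bounded above by $n$; a non-decreasing bounded integer sequence is eventually constant, so its maximum is attained at some $k^{\star}$, which gives the second assertion. For the third, assume the $\Gamma_{k}^{a}$ all have locally constant dimension near $(x,\ol{u}_{b})$. Since $\Gamma_{0}^{a}$ is spanned by a subfamily of $f_{1},\ldots,f_{n-1}$, which are independent on a dense open subset of $X$, $\dim\Gamma_{0}^{a}=n-p$ holds on a dense open set, hence — by local constancy — in a neighbourhood of $(x,\ol{u}_{b})$. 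Now whenever $\dim\Gamma_{k+1}^{a}=\dim\Gamma_{k}^{a}$ at the point, local constancy makes this hold on a neighbourhood, so the nested distributions $\Gamma_{k}^{a}\subseteq\Gamma_{k+1}^{a}$, having there the same constant rank, coincide as distributions, hence as modules of vector fields; the recursion \eqref{Gamma-a-k:def} then yields $\Gamma_{j}^{a}=\Gamma_{k}^{a}$ for all $j\ge k$ (e.g. $\Gamma_{k+2}^{a}=\Gamma_{k+1}^{a}+\ad_{F_{0}^{b}}\Gamma_{k+1}^{a}=\Gamma_{k}^{a}+\ad_{F_{0}^{b}}\Gamma_{k}^{a}=\Gamma_{k+1}^{a}$, and so on). Thus the dimension strictly increases at every step until it stabilises; starting at $n-p$ and capped at $n$, it can increase at most $p$ times, whence $k^{\star}\le p$.

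The main obstacle is Step~1, and specifically the point that $F_{0}^{b}$ is \emph{not} itself a section of $\TT X$ (it carries the Cartan term $\tau_{b}$), so one cannot simply invoke involutivity of $\TT X$; what must be used is that $\ad_{F_{0}^{b}}$ nevertheless preserves $\TT X$, which works precisely because of the triangular shape of $\tau_{b}$ — its coefficients are the successive $u_{b}$-derivatives, which carry no $x$-dependence. I should also be careful to keep the induction hypothesis as ``$\Gamma_{k}^{a}\subset\TT X$'' rather than the stronger ``coefficients depend on $x$ alone'', since the latter already fails at $k=1$, where $[F_{0}^{b},f_{i}]$ depends on $u_{b}$. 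Once Step~1 is in place, Steps~2 and 3 are routine consequences of the monotonicity of the filtration and the uniform rank bound $n$.
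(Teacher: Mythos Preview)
Your proof is correct. The paper takes a slightly different route for Step~1: it introduces an auxiliary nested sequence of distributions on $X$, defined by $\Delta_{0}^{a}=\Gamma_{0}^{a}$ and $\Delta_{k+1}^{a}=\Delta_{k}^{a}+\ad_{f_{0}}\Delta_{k}^{a}+[\Gamma_{0}^{b},\Delta_{k}^{a}]$, and proves by induction that $\Gamma_{k}^{a}(x,\ol{u}_{b})\subset\Delta_{k}^{a}(x)$ for all $k$. The core computation is the same as yours --- one writes $\gamma\in\Gamma_{j}^{a}$ as $\sum_{r}\alpha_{r}(x,\ol{u}_{b})\gamma_{r}(x)$ with $\gamma_{r}$ in a basis of $\Delta_{j}^{a}$ and expands $[F_{0}^{b},\gamma]$ into $\ad_{f_{0}}\gamma+u_{b}[f_{b},\gamma]+\sum_{r}(L_{\tau_{b}}\alpha_{r})\gamma_{r}$ --- but it tracks the finer inclusion $\Gamma_{k}^{a}\subset\Delta_{k}^{a}$ rather than merely $\Gamma_{k}^{a}\subset\TT X$. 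Your argument is more direct and entirely sufficient for the lemma as stated; the paper's version yields the extra information that the $\Gamma_{k}^{a}$'s are dominated by a filtration depending on $x$ alone, though this is not used elsewhere. Your caveat that the induction hypothesis must be ``$\Gamma_{k}^{a}\subset\TT X$'' and not ``coefficients depend on $x$ only'' is exactly right, and the paper handles it the same way by allowing the $\alpha_{r}$ to depend on $\ol{u}_{b}$. Steps~2 and~3 are essentially identical in both proofs.
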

\begin{proof}
Consider the sequence of distributions of $\TT X$:
\begin{equation}\label{Delta-a-k:def}
\Delta_{k+1}^{a} \triangleq \Delta_{k}^{a} + \ad_{f_{0}}\Delta_{k}^{a} + [\Gamma_{0}^{b},\Delta_{k}^{a}], \quad \forall k\geq 0, \qquad \Delta_{0}^{a}= \Gamma_{0}^{a}.
\end{equation}

Every $\Delta_{k}^{a}$ is a subdistribution of $\TT X$, hence $\dim\Delta_{k}^{a}(x)\leq n$ for all $k\in \N$ and all $x\in X$, with $\Delta_{k}^{a} \subset \Delta_{k+1}^{a}$. Thus, there exists an integer $k^{\star}_{\Delta}$, possibly depending on $x$, this dependence being omitted for simplicity's sake,  such that $\dim \Delta_{k}^{a}(x) \leq \dim\Delta_{k^{\star}_{\Delta}}^{a}(x)$ for all $k\leq k^{\star}_{\Delta}$ and $x\in X$, and $\dim \Delta_{k}^{a}(x) = \dim\Delta_{k^{\star}_{\Delta}}^{a}(x)$ for all $k\geq k^{\star}_{\Delta}$ and $x\in X$.

Next, considering the vector field $F_{0}^{b}$ given by \eqref{f0bext:def}, we show by induction that the distributions \eqref{Gamma-a-k:def} satisfy
$\Gamma_{k}^{a}(x,\ol{u}_{b}) \subset \Delta_{k}^{a}(x)$ for all $k\geq 0$, all $x\in X$ and all 
$\ol{u}_{b}$ in $\R^{p-1}_{\infty}$. 

This relation is indeed valid for $k=0$.

Assuming that it holds up to $j$, a vector field $\gamma \in \Gamma_{j}^{a}$ has the form
$$\gamma(x,\ol{u}_{b}) \triangleq \sum_{r=1}^{r_{j}}\alpha_{r}(x,\ol{u}_{b})\gamma_{r}(x),$$ where $\{ \gamma_{r}, r=1,\ldots, r_{j} \}$ are chosen in a basis of $\Delta_{j}^{a}$, hence independent of $\ol{u}_{b}$ and commuting with $\tau_{b}$, \ie $[ \tau_{b}, \gamma_{r}]=0$, and where $r_{j}$ stands for the dimension of $\Gamma_{j}^{a}(x,\ol{u}_{b})$. 

Let us compute $\ad_{F_{0}^{b}}\gamma$. We have
$$\begin{aligned}
\ad_{F_{0}^{b}}\gamma &= [ f_{0}+u_{b}f_{b}+\tau_{b}, \gamma ]= \ad_{f_{0}}\gamma + u_{b} [ f_{b},\gamma]+ \sum_{r=1}^{r_{j}} \left( L_{\tau_{b}}\alpha_{r} \right) \gamma_{r} \\
&\in \ad_{f_{0}}\Delta_{j} + [\Gamma_{0}^{b},\Delta_{j}] + \Delta_{j} = \Delta_{j+1}
\end{aligned}$$ 
which proves that $\Gamma_{j+1}^{a}(x,\ol{u}_{b}) \subset \Delta_{j+1}^{a}(x) \subset \TT_{x}X$ for all $x\in X$ and all 
$\ol{u}_{b}$ in $\R^{p-1}_{\infty}$. 

Since  $\Gamma_{j}^{a}(x,\ol{u}_{b}) \subset \Gamma_{j+1}^{a}(x,\ol{u}_{b}) \subset \TT_{x}X$ for all $(x,\ol{u}_{b})\in X\times \R^{p-1}_{\infty}$, we immediately conclude that $\dim \Gamma_{j}^{a}(x,\ol{u}_{b})$ is bounded above by $n$ for all $(x,\ol{u}_{b})\in X\times \R^{p-1}_{\infty}$. 
If, moreover, $(x,\ol{u}_{b})$ is such that the $\Gamma_{k}^{a}(x,\ol{u}_{b})$'s have locally constant dimensions, this bound is reached at some integer $k^{\star}\leq p$ since $\dim \Gamma_{k+1}^{a} - \dim \Gamma_{k}^{a} \geq 1$ for all $k< k^{\star}$, with $\dim \Gamma_{0}^{a} = n-p$, hence the lemma.
\end{proof}
\bigskip

From now on, we shall use the notation $\ol{u}^{(\alpha)} \triangleq \left( u, \dot{u}, \ldots, u^{(\alpha)}\right)$ for every finite $\alpha\in \N$ and $\ol{u}\in \R^{m}_{\infty}$, \ie the vector of successive derivatives of $u$ from the order 0, with $u^{(0)}\triangleq u$, up to the order $\alpha$.
\bigskip

\begin{thm}\label{flat-non-gener-thm}
Assume that, in a given neighborhood $V(x_{0})\subset X$ of $x_{0}$ and for all $\ol{u}_{b}$ in an open dense subset of $\R^{p-1}_{\infty}$:
\begin{itemize}
\item[(i)] $\Gamma_{k}^{a}$ has constant dimension and is involutive for every $k\geq 0$,
\item[(ii)] $\Gamma_{p}^{a}= \TT\R^{n}$.
\end{itemize}
Then, system~\eqref{equ::explicit} is flat with flat output $(z_{1}, \ldots, z_{n-p}, u_{n-p+1}, \ldots, u_{n-1})$, where $z_{i}\triangleq \varphi_{i,0}(x,\ol{u}_{b}^{(k_{i}-3)})$, $i=1,\ldots, n-p$, is defined in a neighborhood of every point of a dense open set of  $V(x_{0})\times \R^{(p-1)(2k_{1}-5)}$, the integers $k_1 \geq \ldots \geq k_{n-p} \geq 0$ being the list of Brunovsk\'y controllability indices (see proposition~\ref{Gamma:prop} below), and $x_{0}$ is an apparent singularity.
\end{thm}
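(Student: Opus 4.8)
The plan is to reduce Theorem~\ref{flat-non-gener-thm} to the generic case (Theorem~\ref{theo::flat_outputs}) applied to the system with restricted controls $u_a$ and extended drift $F_0^b$. The point is that, with $u_bf_b$ absorbed into the drift, system~\eqref{equ::explicit} becomes an affine system on $X\times\R^{p-1}_\infty$ with $n-p$ genuine controls $u_a$, drift $F_0^b$ given by~\eqref{f0bext:def}, and control vector fields $f_1,\dots,f_{n-p}$. Assumptions (i) and (ii) say precisely that the accessibility distributions $\Gamma_k^a$ built from this restricted system are involutive, of locally constant dimension, and reach the full tangent space $\TT\R^n$ after $p$ steps — in other words, the restricted system satisfies the classical Jakubczyk–Respondek / Hunt–Su–Meyer conditions for static (extended-state) feedback linearizability along the lines of Section~\ref{subsec:feedback}. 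So the first step is to state and prove the auxiliary proposition~\ref{Gamma:prop} (referenced in the statement) that, under (i)--(ii), the restricted system is equivalent, by a diffeomorphism and a suitable prolongation of the $u_a$'s, to a Brunovský form with controllability indices $k_1\geq\cdots\geq k_{n-p}$ summing to $n$; here is where the integers $k_i$ and the bound $k^\star\leq p$ from the preceding lemma enter.

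\textbf{Construction of the flat output.} Once the Brunovský form is in hand, the $n-p$ ``top'' coordinates $z_1,\dots,z_{n-p}$ of the linearizing diffeomorphism are, by the same mechanism as in the proof of Theorem~\ref{theo::flat_outputs}, functions that are \emph{first integrals} of the distribution $\Gamma_{k_i-2}^a$ (equivalently, whose Lie derivatives along $F_0^b$ up to order $k_i-1$ are annihilated by the controls). Tracking how many derivatives of $\ol u_b$ appear: since $F_0^b$ contains $u_b$ (order $0$) and each successive $\ad_{F_0^b}$ raises the derivative order of $u_b$ by one, while $z_i$ must be differentiated $k_i-1$ times to make $u_a$ appear, one checks that $z_i=\varphi_{i,0}(x,\ol u_b^{(k_i-3)})$, defined on $V(x_0)\times\R^{(p-1)(2k_1-5)}$ (the dimension count $(p-1)(2k_1-5)$ coming from the jets of $u_b$ needed through differentiating $z_1$ twice more). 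I would then verify that $(z_1,\dots,z_{n-p},u_{n-p+1},\dots,u_{n-1})$ is a flat output of the \emph{full} system: $x$ and $u_a$ are recovered from the $z_i$ and finitely many of their derivatives together with $\ol u_b$ via the Brunovský diffeomorphism, and $u_b$ is part of the flat output itself, so the full state and all controls are expressed as functions of $(z,u_b)$ and their derivatives, which is exactly the Lie–Bäcklund equivalence to the trivial system via Theorem~\ref{endo-equiv:thm}.

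\textbf{The apparent-singularity conclusion.} Finally, since by hypothesis $\dim\Gamma_0(x_0)=n-p<n-1$, the point $x_0$ lies outside $\Omega_0$ (indeed outside the open dense set $\mathcal O$ where the $f_i$ are independent), hence is a singularity for the generic atlas of Theorem~\ref{theo::atlas}; but the chart just constructed contains $x_0$ (for $\ol u_b$ in the relevant dense open set), so $x_0$ is merely \emph{apparent} in the sense of Definition~\ref{def::intrinsic-singularities}.

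\textbf{The main obstacle.} The delicate point is not the feedback-linearization black box but the bookkeeping that turns ``$\Gamma_k^a\subset\TT X$ for all $k$, involutive, reaching $\TT\R^n$'' into a clean Brunovský normal form \emph{over the jet base} $\R^{p-1}_\infty$, i.e.\ making precise in which sense the $u_b$-jets behave as parameters, why ``open dense in $\R^{p-1}_\infty$'' is the right genericity hypothesis (the distributions $\Gamma_k^a$ may drop rank on a thin set of $\ol u_b$), and getting the derivative orders $k_i-3$ and the dimension $(p-1)(2k_1-5)$ exactly right — this requires carefully iterating the identity $L_{f_k}L_g\psi_0=-L_{[g,f_k]}\psi_0$ from the proof of Theorem~\ref{theo::flat_outputs} in the present prolonged setting, where $g$ is replaced by $F_0^b$ and each bracket introduces one more $u_b$-derivative.
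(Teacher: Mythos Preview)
Your proposal is essentially the paper's own argument: invoke Proposition~\ref{Gamma:prop} (the Jakubczyk--Respondek/Brunovsk\'y construction adapted to the drift $F_0^b$ over the $u_b$-jet base) to produce the $\varphi_{i,0}$'s as first integrals of the involutive tower $\Gamma_k^a$, then differentiate along trajectories to recover $x$ and $u_a$ from $(z,\ol u_b)$ and finitely many derivatives, with $u_b$ itself completing the flat output.

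One small correction of framing: this is \emph{not} a reduction to Theorem~\ref{theo::flat_outputs}. That theorem treats the codimension-one case where a single bracket $[g,f_k]$ completes $\Gamma_0$ and flat outputs are first integrals of one vector field; here, with only $n-p$ effective controls and $p>1$ bracket steps needed, one genuinely requires the full Brunovsk\'y tower and the $\varphi_{i,0}$ are first integrals of $\Gamma_{k_i-2}^a$, not of a single $f_k$. You acknowledge this when you invoke Jakubczyk--Respondek, so the substance is right, but the opening sentence misdescribes the reduction. The paper proceeds directly via Proposition~\ref{Gamma:prop} and the identities $L_{\ad^j_{F_0^b}f_k}\varphi_{i,0}=0$ (your iterated version of $L_{f_k}L_g\psi_0=-L_{[g,f_k]}\psi_0$), exactly as you outline in your ``main obstacle'' paragraph.
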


Before stating the proof, let us sketch the ideas, that display strong similarities with the construction of the diffeomorphism that transforms the nonlinear dynamics into the Brunovsk\'y controllability canonical form of the static feedback linearization problem (see \cite[Corollary~1]{JakubczykRespondek80},  \cite[Chap.1, Sec. 3 and Chap. 5, Sec. 6]{Isidori86} or \cite[Sec. 4.1.3]{Levine97}), the main difference being that the distributions \eqref{Gamma-a-k:def} may depend on $u_{b}$ and a finite number of its derivatives. More precisely, transforming $x$ and $u_b$ and successive derivatives into a flat output $z$ implies that the Brunovsk\'y controllability indices of the corresponding linear system are obtained from \eqref{Gamma-a-k:def}. Moreover, the successive derivatives of $z$ with respect to the drift \eqref{f0bext:def}, up to a certain order, cannot depend on $u_a$ (see \eqref{z-i-k:eq} and \eqref{z-i-ki-1:eq} below). The corresponding conditions are expressed as conditions on the iterated Lie brackets \eqref{L-ad-j-f0-fk:eq} that generate the distributions \eqref{Gamma-a-k:def} and, thanks to properties (i) and (ii) of theorem~\ref{flat-non-gener-thm}, they imply the existence of $n-p$ local first integrals of the $\Gamma_{k}^{a}$'s, by Frobenius theorem (see proposition~\ref{Gamma:prop} below), first integrals that constitute the $n-p$ first components of a flat output, the $p-1$ remaining ones being the components of $u_b$.

We start with the following proposition:
\begin{prop}\label{Gamma:prop}
Assume that the assumptions (i)-(ii) of theorem~\ref{flat-non-gener-thm} are valid. Then there exist integers $k_{1},\ldots,k_{n-p}$,  with $p+1 \geq k_{1} \geq \ldots,\geq k_{n-p}\geq 0$ and $\ds \sum_{i=1}^{n-p} k_{i} = n$,
(the so-called  \emph{Brunovsk\'{y} controllability indices of the $\Gamma_{k}^{a}$'s}) and
$n$ smooth independent functions $(\varphi_{1,0},\ldots, \varphi_{1,k_{1}-1}, \ldots, \varphi_{n-p,0}, \ldots, \varphi_{n-p, k_{n-p}-1})$ defined in a neighborhood $W(\xi,\ol{\nu}_{b})$ of every $(\xi,\ol{\nu}_{b})$ in a dense subset $W$ of $V(x_{0})\times \R^{(p-1)}_{\infty}$, with
\begin{equation}\label{phi-i-j:edf}
\begin{aligned}
&\varphi_{i,j}: (x, \ol{u}_{b}^{(k_{i}+j-3)})\in W(\xi,\ol{\nu}_{b}) \rightarrow \R,  \quad j=0,\ldots, k_{i}-1,
\\
&\varphi_{i,j+1}(x, \ol{u}_{b}^{(k_{i}+j-2)})= L_{F_{0}^{b}}\varphi_{i,j} (x, \ol{u}_{b}^{(k_{i}+j-2)}),  \quad j=0,\ldots, k_{i}-2
\end{aligned}
\end{equation}
satisfying, in $W(\xi,\ol{\nu}_{b})$, and for every $(\xi,\ol{\nu}_{b})\in W$:
\begin{equation}\label{1st-integ-phi-i-j:eq} 
<d\varphi_{i,j}, \Gamma_{k}^{a}> = 0, \quad k= 0, \ldots, k_{i}-j-2, \quad j= 0, \ldots, k_{i}-2,  \quad i=1,\ldots, n-p,
\end{equation}
the $(n-p)\times (n-p)$ matrix $\Delta$ whose entries are
\begin{equation}\label{Delta:eq}
\Delta_{i,j} \triangleq L_{ad^{k_{j}-1}_{F_{0}^{b}}f_{i}}\varphi_{j,0}, \quad i,j= 1, \ldots, n-p
\end{equation}
being invertible in $W(\xi,\ol{\nu}_{b})$.

Moreover, the mapping $x \in \mathrm{pr}_{X}W(\xi,\ol{\nu}_{b}) \mapsto \Phi(x, \ol{u}_{b}^{(2k_{1}-4)})\in \R^{n}$, with $\Phi$ defined by
\begin{equation}\label{Phi:def}
\begin{aligned}
\Phi(x, \ol{u}_{b}^{(2k_{1}-4)}) &\triangleq
\left(\varphi_{1,0}(x, \ol{u}_{b}^{(k_{1}-3)}),\ldots, \varphi_{1,k_{1}-1}(x,\ol{u}_{b}^{(2k_{1}-4)}), \ldots\right.\\
& \hspace{1cm} \left. \ldots, \varphi_{n-p,0}(x,\ol{u}_{b}^{(k_{n-p}-3)}), \ldots, 
\varphi_{n-p, k_{n-p}-1}(x,\ol{u}_{b}^{(2 k_{n-p}-4)})\right)\\
&= \left(\varphi_{1,0}(x, \ol{u}_{b}^{(k_{1}-3)}),\ldots, L_{F_{0}^{b}}^{k_{1}-1}\varphi_{1,0}(x,\ol{u}_{b}^{(2k_{1}-4)}), \ldots \right.\\
&\hspace{1cm} \left. \ldots, \varphi_{n-p,0}(x,\ol{u}_{b}^{(k_{n-p}-3)}), \ldots, 
L_{F_{0}^{b}}^{k_{{n-p}}-1}\varphi_{n-p, 0}(x,\ol{u}_{b}^{(2k_{n-p}-4)})\right)
\end{aligned}
\end{equation}
is a local diffeomorphism from $\mathrm{pr}_{X}W(\xi,\ol{\nu}_{b})$ to an open subset of $\R^{n}$,  for all $\ol{\nu}_{b}$ in an open dense subset of $\R^{p-1}_{\infty}$.
\end{prop}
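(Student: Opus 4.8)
The plan is to mimic the classical construction that brings a static-feedback-linearizable system into Brunovsk\'y canonical form, but carried out relative to the drift $F_0^b$ on the extended manifold $X\times\R^{p-1}_\infty$, treating $\ol u_b$ and its derivatives as parameters. First I would fix a point $(\xi,\ol\nu_b)$ at which all $\Gamma_k^a$ have locally constant dimension (such points are dense by hypothesis (i)), and introduce the co-rank sequence: since $\Gamma_0^a$ has dimension $n-p$ and each $\Gamma_{k+1}^a$ strictly contains $\Gamma_k^a$ until the chain saturates at $\Gamma_p^a=\TT\R^n$ by (ii), the increments $\rho_k\triangleq\dim\Gamma_k^a-\dim\Gamma_{k-1}^a$ form a nonincreasing sequence of positive integers summing to $p$. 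The Brunovsk\'y indices $k_1\ge\cdots\ge k_{n-p}\ge 0$ are then the conjugate partition: $k_i\triangleq\#\{k\ge 0:\dim\Gamma_k^a-\dim\Gamma_{k-1}^a\ge i\}$ (with $\dim\Gamma_{-1}^a\triangleq n-p-1$ handled by the convention that the first increment counts the $n-p$ fields themselves), so that $\sum_i k_i=n$ and $p+1\ge k_1$. I would verify this arithmetic carefully, as it is the bookkeeping backbone of the statement.

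Next I would produce the functions $\varphi_{i,0}$. By hypothesis (i) each $\Gamma_k^a$ is involutive of constant dimension, so Frobenius' theorem applies and yields, locally around $(\xi,\ol\nu_b)$, a full set of independent first integrals of each distribution; because the $\Gamma_k^a$ are nested, we may choose these first integrals compatibly, i.e.\ a flag of function systems. Concretely, $\Gamma^a_{k_1-2}$ has codimension equal to the number of indices $k_i\ge k_1$, call it $m_1$; pick $m_1$ independent first integrals of $\Gamma^a_{k_1-2}$ — these are the $\varphi_{i,0}$ for those $i$. Then $\Gamma^a_{k_1-3}$ has larger codimension; complete the previous functions by additional first integrals of $\Gamma^a_{k_1-3}$ to account for the indices $k_i=k_1-1$, giving the corresponding $\varphi_{i,0}$, and so on down the flag. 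This construction forces exactly the annihilation pattern \eqref{1st-integ-phi-i-j:eq} at level $j=0$. The higher $\varphi_{i,j}$ are then \emph{defined} by the recursion $\varphi_{i,j+1}=L_{F_0^b}\varphi_{i,j}$ in \eqref{phi-i-j:edf}, and I would prove \eqref{1st-integ-phi-i-j:eq} for all $j$ by a downward induction on the level: using $[\,F_0^b,\Gamma_k^a\,]\subset\Gamma_{k+1}^a$ (which is just \eqref{Gamma-a-k:def}) together with the Leibniz identity $L_Y L_{F_0^b}\varphi=L_{F_0^b}L_Y\varphi - L_{[F_0^b,Y]}\varphi$, one shows $\langle d\varphi_{i,j+1},\Gamma_k^a\rangle = \langle d\varphi_{i,j},[F_0^b,\Gamma_k^a]\rangle \subset \langle d\varphi_{i,j},\Gamma_{k+1}^a\rangle$, which vanishes for $k+1\le k_i-j-2$, i.e.\ $k\le k_i-(j+1)-2$, as required.

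It then remains to show that $\Phi$ in \eqref{Phi:def} is a local diffeomorphism and that $\Delta$ in \eqref{Delta:eq} is invertible; in fact these two facts are essentially equivalent. Writing the differential $d\Phi$ as a stack of the covectors $d\varphi_{i,j}$, one tests it against the $n$ vector fields $\{\ad^{k_j-1-s}_{F_0^b}f_{i_j}\}$ adapted to the flag (a standard basis completion: at each level the fields $\ad^{\ell}_{F_0^b}f_i$ that are ``new'' in $\Gamma^a_\ell$). The annihilation pattern \eqref{1st-integ-phi-i-j:eq} makes the resulting $n\times n$ matrix block lower-triangular, with the diagonal blocks being (up to sign and reindexing) copies of $\Delta$; hence $d\Phi$ is invertible iff $\Delta$ is. Invertibility of $\Delta$ itself is exactly the statement that the fields $\ad^{k_j-1}_{F_0^b}f_i$ are linearly independent modulo $\Gamma^a_{k_1-2}$ plus lower pieces — equivalently that the top-level increment of the chain is spanned by these brackets, which is a direct consequence of the definition \eqref{Gamma-a-k:def} and the constant-dimension hypothesis, generically in $\ol u_b$. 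I expect the genuine difficulty to lie precisely here: making the flag-adapted basis completion canonical enough that the triangular block structure is transparent, and pinning down in which variables (the finitely many derivatives $\ol u_b^{(\alpha)}$) each $\varphi_{i,j}$ actually lives — the latter is a matter of tracking how many times $L_{F_0^b}$, which differentiates $u_b$ once, has been applied, giving the index shifts $k_i+j-3$ recorded in \eqref{phi-i-j:edf}. The ``dense open'' and ``generic $\ol\nu_b$'' qualifiers absorb the loci where ranks drop or the Frobenius hypothesis fails, so no further argument is needed for those.
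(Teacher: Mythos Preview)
Your proposal is correct and follows essentially the same route as the paper's proof: both construct the Brunovsk\'y indices as the conjugate partition of the dimension increments of the chain $\Gamma_k^a$, invoke Frobenius on the involutive flag to produce the $\varphi_{i,0}$, define the higher $\varphi_{i,j}$ by iterated Lie derivation along $F_0^b$, and deduce the annihilation pattern \eqref{1st-integ-phi-i-j:eq} via the identity $L_{f_k}L_{F_0^b}^{j}\varphi=(-1)^{j}L_{\ad_{F_0^b}^{j}f_k}\varphi$ (your Leibniz induction is exactly this). The paper in fact delegates several of the steps you spell out (nonincrease of the increments, the block-triangular structure yielding invertibility of $d\Phi$ and of $\Delta$) to the references \cite{JakubczykRespondek80,Isidori86,Levine97}, so your sketch is, if anything, more self-contained. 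One small bookkeeping slip: your convention $\dim\Gamma_{-1}^a=n-p-1$ gives $\rho_0=1$, not $n-p$; the paper simply \emph{defines} $r_0\triangleq\dim\Gamma_0^a=n-p$, and you should do the same so that $\sum_j r_j=n$ and the conjugate partition $k_i=\#\{j\ge 0: r_j\ge i\}$ comes out right.
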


\begin{proof} As already announced, the proposition results from \cite[Corollary~1]{JakubczykRespondek80}. See also \cite[Chapter1, Section 3 and Chapter 5, Section 6]{Isidori86} or \cite[Section 4.1.3]{Levine97}.

We define the \emph{Brunovsk\'{y} controllability indices} $k_{1},\ldots,k_{n-p}$ associated to the $\Gamma_{k}^{a}$'s by first introducing the integers $r_{j}$ by 
$$r_{j}\triangleq \dim \Gamma_{j}^{a} -\dim \Gamma_{j-1}^{a} \quad \forall j\geq 1, \qquad r_{0} \triangleq \dim\Gamma_{0}^{a} = n-p.$$
Indeed,
$$\dim \Gamma_{j}^{a} = \sum_{k=0}^{j} r_{k} \triangleq \rho_{j}, \quad \forall j \geq 0.$$

Let us prove that $r_{j+1} \leq r_{j}$ for all $j \geq 0$.

We denote by $\{\gamma_{1}^{j},\ldots, \gamma_{\rho_{j}}^{j}\}$ a basis of the vector space $\Gamma_{j}^{a}$ for all $j\geq 0$ and $\{ \eta_{1}^{j},\ldots, \eta_{r_{j+1}}^{j}\}$ a basis of the supplementary subspace of $\Gamma_{j}^{a}$ in $\Gamma_{j+1}^{a}$.

We thus have 
$$\begin{aligned}
\Gamma_{j+1}^{a} &= \spn\{ \gamma_{1}^{j+1}, \ldots, \gamma_{\rho_{j+1}}^{j+1} \} 
\\
& = \spn\{ \gamma_{1}^{j}, \ldots, \gamma_{\rho_{j}}^{j} \} \oplus \spn\{ \eta_{1}^{j},\ldots, \eta_{r_{j+1}}^{j} \} \\
& = \spn\{ \gamma_{1}^{j-1}, \ldots, \gamma_{\rho_{j-1}}^{j-1} \} \oplus \spn\{ \eta_{1}^{j-1},\ldots, \eta_{r_{j}}^{j-1} \} \oplus \spn\{ \eta_{1}^{j},\ldots, \eta_{r_{j+1}}^{j} \}
\end{aligned}$$ 

Therefore we have: 
$\spn\{ \eta_{1}^{j},\ldots, \eta_{r_{j+1}}^{j} \} \subset \spn\{ \ad_{F_{0}^{b}}\eta_{1}^{j-1}, \ldots, 
\ad_{F_{0}^{b}}\eta_{r_{j}}^{j-1} \}$ 
which immediately yields: $r_{j+1} \leq r_j$.

The sequence $r_{j}, j\geq 0$ being non increasing with $r_{j}\geq 0$ for all $j\geq 0$, there exists an ultimate $j^{\star}$ such that $r_{j^{\star}}>0$ and $r_j=0$ for all $j > j^{\star}$. We indeed have $j^{\star}\leq p$ and $\sum_{j=0}^{j^{\star}}r_j = \dim \Gamma^{a}_{j^{\star}}  =  \Gamma^{a}_{p} = n$ by assumption (ii) of theorem~\ref{flat-non-gener-thm}.

Then, the \emph{Brunovsk\'{y} controllability indices} $k_{1},\ldots,k_{n-p}$ are given by
$$k_{i}\triangleq \# \{j \mid r_{j} \geq i\}, \quad i\geq 1$$
where $\#A$ denotes the number of elements of an arbitrary set $A$.

Again following the same lines as in \cite{JakubczykRespondek80,Isidori86,Levine97}, we immediately get that $k_{i} \leq k_{1}= j^{\star}+1\leq p +1$ for all $i$, or
$$\max\{ k_{i} \mid i \geq 0 \}= k_{1} \leq p +1.$$

Moreover, for all $i\geq 1$, the dimension jumps $r_{j}$, sorted by jumps of equal dimension $i$, being $k_{i}-k_{i+1}$, we get $\dim\Gamma^{a}_{p}=\sum_{i=1}^{n-p} i(k_{i}-k_{i+1})= k_{1}+\ldots+k_{n-p}$ and, since $\Gamma_{p}^{a}= \Gamma_{k_{1}}^{a}= \TT\R^{n}$ by assumption (ii), that $k_{1}+\ldots+k_{n-p}  = n$. 

Moreover, possibly up to a new renumbering of the $f_{i}$'s and for all $\ol{u}_{b}$ in a dense subset of $\R^{p-1}_{\infty}$, they satisfy:

\begin{itemize}
\item if $0\leq i\leq k_{n-p}-1$:
\begin{equation}\label{Gamma-i-1:def}
\Gamma_{i}^{a} = \spn\{f_{1}, \ldots,\ad_{F_{0}^{b}}^{i} f_{1}, \ldots ,
f_{n-p} , \ldots, \ad_{F_{0}^{b}}^{i} f_{n-p}\}
\end{equation}
\item if $k_{j+1}\leq i \leq k_{j}-1$, $1\leq j\leq n-p-1$:
\begin{equation}\label{Gamma-i-2:def}\begin{aligned}
\Gamma_{i}^{a}= \spn\{&f_{1}, \ldots,\ad_{F_{0}^{b}}^{i}f_{1}, \ldots, f_{j}, \ldots,\ad_{F_{0}^{b}}^{i} f_{j},\ldots \\
&\ldots, f_{j+1}, \ldots,\ad_{F_{0}^{b}}^{k_{j+1}-1} f_{j+1} , \ldots, f_{n-p}, \ldots, \ad_{F_{0}^{b}}^{k_{n-p}-1} f_{n-p}\}
\end{aligned}
\end{equation}
\item and for $i\geq k_{1}$:
\begin{equation}\label{Gamma-i-3:def}
\Gamma_{i}^{a}=\Gamma_{k_{1}}^{a} = \spn\{f_{1}, \ldots,\ad_{F_{0}^{b}}^{k_{1}-1}f_{1}, \ldots, 
f_{n-p}, \ldots, \ad_{F_{0}^{b}}^{k_{n-p}-1} f_{n-p}\}.
\end{equation}
\end{itemize}

Note, moreover, that $\Gamma_{k_{1}}^{a}$ so defined depends at most on the
$$\max\{ k_{n-p}-2, \, k_{n-p-1}-2, \ldots, k_{1}-2 \} = k_{1} - 2$$ 
first successive derivatives of  $\ol{u}_{b}$, \ie on $\ol{u}^{(k_{1}-2)}$.

 Hence, since all the $\Gamma_{i}^{a}$'s are involutive by assumption (i), by Frobenius theorem, there exist $n-p$ independent first integrals
$\varphi_{1,0}(x,\ol{u}_{b}^{(k_{1}-3)})=z_{1},\ldots, \varphi_{n-p,0}(x,\ol{u}_{b}^{(k_{n-p}-3)})=z_{n-p}$ 
defined in a neighborhood $W(\xi,\ol{\nu}_{b})$ of every $(\xi,\ol{\nu}_{b})$ in a dense subset $W$ of $V(x_{0})\times \R^{(p-1)}_{\infty}$ satisfying
\begin{equation}\label{adf0-j:eq}
L_{ad^{j}_{F_{0}^{b}}f_{k}}\varphi_{i,0}=0, \quad  j=0, \ldots, k_{i}-2, \quad i, k= 1,\ldots, n-p,
\end{equation}
or equivalently \eqref{1st-integ-phi-i-j:eq} and for which the matrix $\Delta$ given by \eqref{Delta:eq}, is invertible for all $\ol{u}_{b}$ in a dense subset of $\R^{p-1}_{\infty}$. 

Next, considering the functions defined by \eqref{phi-i-j:edf}:
$$\varphi_{i,j} (x, \ol{u}_{b}^{}) \triangleq L^{j}_{F_{0}^{b}}\varphi_{i,0}(x) , \quad  j= 1, \ldots, k_{i}-2, \quad i= 1, \ldots, n-p, $$ 
by \eqref{Gamma-i-1:def}-\eqref{Gamma-i-2:def}-\eqref{Gamma-i-3:def}, they are such that
\begin{equation}\label{adf0-j-phi-l:eq}
L_{ad^{j-l}_{F_{0}^{b}}f_{k}}\varphi_{i,l}=0, \quad  0\le l\le
  j\le k_{i}-2, \quad i, k= 1,\ldots, n-p.
\end{equation}
We can prove again, as in \cite{JakubczykRespondek80,Isidori86,Levine97}, that the mapping 
\begin{equation}\label{diffeo-thm3:eq}
\begin{aligned}
x \mapsto &\left ( \varphi_{1,0}, \ldots, \varphi_{1, k_{1}-1}, \ldots, \varphi_{n-p,0}, \ldots, \varphi_{n-p, k_{n-p}-1}\right )\\
&= (z_{1},\ldots, z^{(k_{1}-1)}_{1}, \ldots, z_{n-p},\ldots, z^{(k_{n-p}-1)}_{n-p})
\end{aligned}
\end{equation}
where $z_{i}^{(j)} = \varphi_{i,j} (x, \ol{u}_{b}^{(k_{i}+j-3)}) = L^{j}_{F_{0}^{b}}\varphi_{i,0}(x, \ol{u}_{b}^{(k_{i}+j-3)})$, is a local diffeomorphism of $X$ for every $\ol{u}_{b}$ in a dense open subset of $\R^{p-1}_{\infty}$. 
Moreover, as a consequence of \eqref{adf0-j:eq}, it is easy to prove by induction that, for all  $j=0, \ldots, k_{i}-2$, $i, k= 1,\ldots, n-p$ and $r= 0,\ldots,j$,
\begin{equation}\label{L-ad-j-f0-fk:eq}
L_{ad^{j}_{F_{0}^{b}}f_{k}}\varphi_{i,0}=0 = (-1)^{j}L_{f_{k}}L_{F_{0}^{b}}^{j}\varphi_{i,0} = (-1)^{j}L_{f_{k}}L_{F_{0}^{b}}^{j-r}\varphi_{i,r}.
\end{equation}
\end{proof}

\begin{proof}[Proof of theorem~\ref{flat-non-gener-thm}]
Assume that (i) and (ii) hold true in a neighborhood $V(x_{0})$ of $x_{0}$. Then, according to Propositon~\ref{Gamma:prop}, there exist $n$ smooth independent functions $(\varphi_{1,0},\ldots, \varphi_{1,k_{1}-1}, \ldots, \varphi_{n-p,0}, \ldots, \varphi_{n-p, k_{n-p}-1})$ that satisfy \eqref{1st-integ-phi-i-j:eq} with \eqref{Delta:eq} and such that the mapping $\Phi$ defined by \eqref{Phi:def} is a local diffeomorphism.

Differentiating $k$ times $z_{i}(t) \triangleq \varphi_{i,0}(x(t))$, for $i=1,\ldots, n-p$, with respect to time, where $x(t)$ is an integral curve of system~\eqref{equ::explicit}, we prove by induction, thanks to \eqref{adf0-j:eq}-\eqref{adf0-j-phi-l:eq}, that, for all $k=0, \ldots, k_{i}-2$, 
\begin{equation}\label{z-i-k:eq}
z_{i}^{(k)}(t) = L_{F_{0}^{b}}^{k}\varphi_{i,0} + u_{a}L_{f_{a}}L_{F_{0}^{b}}^{k-1}\varphi_{i,0}
= L_{F_{0}^{b}}^{k}\varphi_{i,0} 
\end{equation}
and
\begin{equation}\label{z-i-ki-1:eq}
z_{i}^{(k_{i}-1)}(t)= L_{F_{0}^{b}}^{k_{i}-1}\varphi_{i,0} + u_{a}L_{f_{a}}L_{F_{0}^{b}}^{k_{i}-2}\varphi_{i,0}.
\end{equation}
Thanks to \eqref{Delta:eq}, the latter relation allows to obtain $u_{a}$ as a function of $z= (z_{1},\ldots, z_{n-p})$ and $u_{b}$ and derivatives up to $k_{1}-2$. Moreover, according to \eqref{diffeo-thm3:eq}, this proves that $x$ and $u_{a}$ can be expressed as functions of the pair $(z, u_{b})$ and successive derivatives in finite number, hence the flatness property.
\end{proof}

\begin{rem}\label{rem:linearization}
As for theorem~\ref{theo::flat_outputs}, the reader may easily check \apost that theorem~\ref{flat-non-gener-thm} may be interpreted in terms of feedback linearization by extending the state as $(x,u_{b},\ldots, u_{b}^{(2k_{1}-5)})$ and inputs $(u_{a},u_{b}^{(2k_{1}-4)})$. 
However, if we try to directly apply the feedback linearization theorem by finding the extension length by essay-error, not only the number $2k_{1}-5$ is not intuitive, and may be large, but its relation with the largest dimension jump, $k_{1}$, of the sequence of distributions \eqref{Gamma-a-k:def} would be ignored.
\end{rem}

\begin{rem}\label{rem:m}
It must be noted that theorem~\ref{flat-non-gener-thm} remains valid in the case of $m\leq n-1$ control inputs, with $n-m\leq p$. This trivial verification is left to the reader.
\end{rem}

\begin{rem}
To the authors knowledge, theorem~\ref{flat-non-gener-thm} is the first one giving a practical condition for which the degenerating directions at a given point, producing a rank drop at this point, are replaced by those generated by another tower of constant rank and involutive subdistributions, namely the $\Gamma_{k}^{a}$'s. In contrast, the apparent singularities dealt with in \cite{CE14,CE17,KLO17} only concern singularities of the parameterization and are not related to a singularity of the distribution of control vector fields.
\end{rem}

\begin{rem}
The application to the global or semi-global motion planning, as
presented in \cite{KLO17} for the non-holonomic car, may be indeed
easily adapted to the present context. 
However, though important in applications, we do not shed new light on
this topic. Therefore, this aspect is not developed again here. 
\end{rem}

\section{Examples}\label{sec:ex}

We now give three simple academic examples, with computational aspects as simple as possible, to illustrate our previous results and emphasize the role played by suitably chosen local first integrals in the analysis of flatness singularities. This is why, apart from the first part of example 2, the flat outputs could also be easily determined by inspection. However, we do not want to make the reader believe that the computational complexity of our conditions is low. It is in fact comparable to the one of the already cited static feedback linearization conditions \cite{Huntetal83,JakubczykRespondek80}.

\subsection{Example 1}
\label{ex:one}
Consider the following system:

$$
\left\{
\begin{array}{ccl}
\dot{x}_1 & = & x_1 u_1 + x_2 \\
\dot{x}_2 & = & x_3 \\ 
\dot{x}_3 & = & u_2.
\end{array}\right.
$$ 

We have: $f_{0} = x_{2} \frac{\partial}{\partial x_{1}} + x_{3}
\frac{\partial}{\partial x_{2}}$, $f_{1} = x_{1}
\frac{\partial}{\partial x_{1}}$ and $f_{2} = \frac{\partial}{\partial
  x_{3}}$. When $x_{1} \neq 0$, the fields $f_{1}$ and $f_{2}$ are
linearly independent and
condition~\eqref{eq::first_order_controllability} holds since
$[g,f_{2}] = [f_{0},f_{2}] = - \frac{\partial}{\partial x_{2}}$. Therefore by
theorem~\ref{theo::flat_outputs}, the system is flat in the dense
open set $\R^{3}\setminus \{x_{1}=0\}$. A flat output is given by two independent first integrals of
$f_{2}$, e.g. $y_{1} \triangleq x_{1}$, $y_{2}\triangleq x_{2})$,
which is easily confirmed by the formulas $x_{1}=y_{1}$,
$x_{2}=y_{2}$, $x_{3}=\dot{y}_{2}$, $u_{1} =
\frac{\dot{y}_{1}-y_{2}}{y_{1}}$ and $u_{2} = \ddot{y}_{2}$. One can
also easily verify that, after extending the system by adding an
integrator to $u_{1}$, \ie $\dot{u}_{1}=v_{1}$, the extended system is
feedback linearizable. 

At a point $(x_{1},x_{2},u_{1})$, where $x_{1} = 0$, the system is
degenerated but still flat. Since $f_{1}=0$ at this point, we have
$\Gamma_{0}= \spn \{f_{2}\}$, with $\dim \Gamma_{0}= 1$. We thus apply
theorem~\ref{flat-non-gener-thm} with $p=2$, $\Gamma_{0}^{a}= \spn\{f_{2}\}$ and 
$\Gamma_{0}^{b}= \spn\{f_{1}\}$. Here $u_{b}=u_{1}$ and
$F_{0}^{b}(x,\ol{u}_{b})= f_{0} + u_{1} f_{1} + \tau_{1} = (x_{2}+u_{1}x_{1})\frac{\partial}{\partial x_{1}} + x_{3}\frac{\partial}{\partial x_{2}} + \sum_{k\geq 0} u_{1}^{(k+1)}\frac{\partial}{\partial u_{1}^{(k)}}$. The reader may easily check that 
$\Gamma_{0}^{a}=  \spn \{\frac{\partial}{\partial x_{3}} \} = \ol{\Gamma_{0}^{a}}$, 
$\Gamma_{1}^{a}=  \spn \{\frac{\partial}{\partial x_{3}}, \frac{\partial}{\partial x_{2}} \} = \ol{\Gamma_{1}^{a}}$ and 
$\Gamma_{2}^{a}=  \spn \{\frac{\partial}{\partial x_{3}}, \frac{\partial}{\partial x_{2}}, \frac{\partial}{\partial x_{1}} \}= \TT\R^{3}$.
Hence we conclude that the system is flat
with $\tilde{y}_{1} \triangleq x_{1}$ and $\tilde{y}_{2} \triangleq
u_1$ as flat output, which indeed implies that $x_{1} = 0$ is an apparent singularity.

\subsection{Example 2}
\label{ex:two}
We now consider the following 4-dimensional driftless system with 3 control inputs:

\begin{equation}\label{ex2-sys:eq}
\left \{
\begin{array}{ccl}
\dot{x}_{1} & = & u_{1} \\
\dot{x}_{2} & = & x_{3} u_{1} \\ 
\dot{x}_{3} & = & x_{4} u_{1} + x_{1} u_{3} \\
\dot{x}_{4} & = & u_{2}.
\end{array} \right.
\end{equation}

The drift is thus $f_{0}= 0$ and the control fields are respectively given by 
$$f_{1}= \frac{\partial}{\partial x_{1}} +  x_{3} \frac{\partial}{\partial x_{2}} +
x_{4} \frac{\partial}{\partial x_{3}}, \quad f_{2} = \frac{\partial}{\partial x_{4}}, \quad f_{3} =
x_{1} \frac{\partial}{\partial x_{3}}.$$  

At points where $x_{1} \neq 0$ and $u_{1} \neq 0$, these three vector fields are linearly
independent and $[g,f_{3}]= u_{1}[f_{1},f_{3}] = u_{1}\left( \frac{\partial}{\partial x_{3}} -
x_{1}\frac{\partial}{\partial x_{2}}\right) \not \in
\spn(f_{1},f_{2},f_{3})$. Therefore assumption
\eqref{eq::first_order_controllability} is satisfied and, by
theorem~\ref{theo::flat_outputs}, the system is flat with flat outputs
given by three independent first integrals of $f_{1}$. An easy
computation yields the following flat output: 
$$
z_{1} = 2x_{2}x_{4}-x_{3}^{2}, \quad z_{2}=x_{1}x_{4}-x_{3}, \quad z_{3}=x_{4}.$$
One could
instead take three independent first integrals of $f_{3}$ as well to
get 
$$z'_{1} = x_{1},\quad  z'_{2} = x_{2}, \quad  z'_{3} = x_{4}.$$   

Note that at points $x_{1} = 0$, the distribution $\spn \{f_{1},f_{2},f_{3}\}$ has dimension 2. The reader can verify that $\Gamma_{0}^{a}= \spn\{f_{1},f_{2}\}$ is not involutive, and therefore that theorem~\ref{flat-non-gener-thm} does not apply. Nevertheless, $x_{1}$, $x_{2}$, and $u_{3}$, is a flat output whenever $u_{1}\neq0$.

\subsection{Example 3}
\label{ex:three}
We now consider an extension of system~\eqref{ex2-sys:eq} by adding two integrators to $u_{1}$, which makes the system a 6 dimensional one with 3 inputs:
\begin{equation}\label{ex3-sys:eq}
\left \{
\begin{array}{ccl}
\dot{x}_{1} & = & x_{5} \\
\dot{x}_{2} & = & x_{3} x_{5} \\ 
\dot{x}_{3} & = & x_{4} x_{5} + x_{1} u_{3} \\
\dot{x}_{4} & = & u_{2}\\
\dot{x}_{5} & = & x_{6}\\
\dot{x}_{6} & = & u_{1}.
\end{array} \right.
\end{equation}
System~\eqref{ex3-sys:eq} is indeed Lie-B\"acklund equivalent to \eqref{ex2-sys:eq} and therefore flat at points where $x_{1} \neq 0$ and $u_{1} \neq 0$.

We have 
$$f_{0}= x_{5}\frac{\partial}{\partial x_{1}} + x_{3}x_{5}\frac{\partial}{\partial x_{2}} + x_{4} x_{5}\frac{\partial}{\partial x_{3}} + x_{6}\frac{\partial}{\partial x_{5}}, \quad 
f_{1}=\frac{\partial}{\partial x_{6}}, \quad f_{2} = \frac{\partial}{\partial x_{4}}, \quad f_{3} =x_{1}\frac{\partial}{\partial x_{3}}.$$ 
We now show that theorem~\ref{flat-non-gener-thm}, for $m=3$ and $n=6$
(see remark~\ref{rem:m}), applies in the open set defined by $x_{5}\neq0$, without restriction on $x_{1}$, even if the distribution 
$\spn \{f_{1},f_{2},f_{3}\}= \spn \{\frac{\partial}{\partial x_{6}}, \frac{\partial}{\partial x_{4}}, x_{1}\frac{\partial}{\partial x_{3}} \}$ 
degenerates at points such that $x_{1}=0$. We have:
$$\Gamma_{0}^{a}= \spn \{f_{1}, f_{2}\}, \quad \Gamma_{0}^{b}=  \spn \{f_{3} \}, \quad p=4 \geq n-m= 3,$$ and
$$F_{0}^{b}= f_{0} + u_{3}f_{3} + \tau_{3} = x_{5}\frac{\partial}{\partial x_{1}} + (x_{4}x_{5} +
u_{3} x_{1}) \frac{\partial}{\partial x_{3}} + x_{6}\frac{\partial}{\partial x_{5}} + \sum_{k\geq 0} u_{3}^{(k+1)}\frac{\partial}{\partial u_{3}^{(k)}}.$$

In the open set defined by $x_{5}\neq0$,  we have $\Gamma_{0}^{a}= \spn\{\frac{\partial}{\partial x_{6}},\frac{\partial}{\partial x_{4}}\}$,
$\Gamma_{1}^{a}= \spn\{\frac{\partial}{\partial x_{6}}, \frac{\partial}{\partial x_{5}}, \frac{\partial}{\partial x_{4}}, x_{5}\frac{\partial}{\partial x_{3}}\}$, and $\Gamma_{2}^{a}= \TT\R^{6}$, which proves that the assumptions of theorem~\ref{flat-non-gener-thm} are satisfied. We immediately get the flat output $z_{1}=x_{1}$, $z_{2}=x_{2}$ and $z_{3}= u_{3}$.

\section{Conclusions}\label{sec:concl}

In this paper, we have studied the set of intrinsic singularities of control affine flat systems with one input less than the number of states. We have proven two theorems in this context, showing how to construct flat outputs and their Lie-B\"{a}cklund atlases, thus allowing to deduce some inclusions of their associated set of intrinsic singularities.
We also give three examples which may be interpreted in a potentially interesting way for applications: if the system degenerates at a point but is still flat there, and if the degeneracy point corresponds to some damaged state, \eg loss of a motor or of a wing of an aircraft, then the present analysis may be helpful for input reconfiguration of the damaged system and emergency motion planning. This idea will be more thoroughly studied in a forthcoming work of the authors.

  \bigskip

\section{Acknowledgement}
The authors wish to express their warmest thanks to one of the anonymous reviewers for his/her careful reading and most useful remarks.

\bibliographystyle{amsplain}
\bibliography{IJRNCsing}

\end{document}